\documentclass[11pt]{article}
\author{Richard Olu Awonusika\footnote{Corresponding author} 
	\\
	\texttt{richard.awonusika@aaua.edu.ng 
	}\\
	Department of Mathematical Sciences\\ Adekunle Ajasin University\\ P.M.B. 001, Akungba-Akoko\\ Ondo State, Nigeria.\\
}
\title{\textbf{Shifted Horadam Collocation Method for Solution of  Nonlinear Fourth-Order Boundary Value Problem in Ordinary Differential Equation}} 
\date{}

\usepackage{enumerate}
\usepackage{float}
\usepackage{hyperref}
\usepackage{mathrsfs}
\usepackage{amsthm}
\usepackage{amsmath}
\usepackage{amssymb}
\usepackage{subfigure}
\usepackage{lscape}

\usepackage[utf8]{inputenc}
\usepackage{graphicx}
\usepackage[margin=1.25 in]{geometry}

\newtheorem{example}{Example}[section]

\newtheorem{corollary}{Corollary}[section]
\newtheorem{proposition}{Proposition}[section]
\theoremstyle{definition}

\theoremstyle{remark}

\numberwithin{equation}{section}
\numberwithin{theorem}{section}

\begin{document}
	\maketitle
	
	%

	\begin{abstract}
In this paper, numerical solutions of  a class of nonlinear ordinary differential equations are obtained using a collocation method based on the shifted Horadam polynomials. The proposed problem, which is of the fourth-order, satisfies a class of two-point boundary conditions. We first discuss definitions and basic properties of the Horadam polynomials before presenting new and useful differentiation formulae for them. Novel interesting identities are deduced from the differentiation properties. The collocation method under consideration assumes that the solution of the proposed problem can be expressed as a shifted Horadam polynomial series. To determine the expansion coefficients of the series solution, one collocates at the zeros of the shifted Horadam polynomials, and the proposed boundary value problem is subsequently reduced to a set of nonlinear algebraic equations. These algebraic equations are then solved using Newton's iterative method to obtain the numerical values of the expansion coefficients of the shifted Horadam polynomial series solution. Several examples of the proposed nonlinear boundary value problem are considered to demonstrate the accuracy, efficiency,  and reliability of the proposed method. Numerical solutions and errors obtained are compared with existing solutions. Comparisons of results, which are shown in tables and graphs, clearly reveal that the shifted Horadam collocation method outperforms the existing methods. 
	\end{abstract}
	\textbf{Keywords}: Fourth-order equation, Two-point boundary conditions, Horadam polynomials, Collocation method, Iterative method, Numerical solution. \\
	\textbf{MSC (2020)}:  34A08, 34B10, 34B15, 34B16, 35A08, 65L05.

	\section{Introduction}
Boundary value problems model several real-life phenomena in astrophysics (such as Lane-Emden-Fowler type problems), solid mechanics (such as cantilever beam deflection problems), fluid dynamics (such as heat transfer of pure nano fluid), mathematical physics (such as reaction-diffusion problems), hydrodynamics, astronomy, mathematical biology, economics, finance, and engineering (\cite{Agarwal1986}, \cite{Ali2019}, \cite{AwoOnuoha}, \cite{Davidson2006}, \cite{Islam2010},   \cite{Khan2003}). Higher order boundary value problems appear often in real-life applications, hence, several researchers have employed both analytical and numerical methods to study higher order boundary value problems (\cite{Akram2013}, \cite{AminRo2021}, \cite{AwoEmdenFowler},  \cite{Bishop1989}, \cite{Costabile2015}). In particular, the fourth-order boundary value problems arise in the  deflection of beam theory and analysis, plate deflection, theory, viscoelastic and inelastic flows, and deformation of beam. Many researchers have developed interests in fourth-order boundary value problems (\cite{Akinnukawe2024}, \cite{Ali2011}, \cite{Costabile2015},  \cite{Dang2010}, \cite{Dang2020}, \cite{Dang2024},  \cite{Hossain2014},  \cite{Khna2021}, \cite{Khan2024bvp}, \cite{Mehrpouya2024},   \cite{Modebei2020}, \cite{Moghadam2022},  \cite{Mohanty2000}, \cite{Noor2007}, \cite{Singh2014},  \cite{Tomar2022}, \cite{Viswan2013}, \cite{Viswana2010}). Numerical methods are capable of handling nonlinear boundary value problems more efficiently than analytical methods (\cite{Canuto1991},  \cite{Hauser2009}, \cite{Lewis2022}).

Different analytical and numerical techniques have been employed by several authors to handle nonlinear fourth-order boundary value problems. The authors in \cite{Mehrpouya2024} used Chebyshev orthogonal collocation method to obtain numerical solutions of nonlinear fourth-order boundary value problems arising in the beam theory.  Dang et al. \cite{Dang2010}, \cite{Dang2020}, and Tomar et al. \cite{Tomar2022} used iterative methods  to approximate solutions of  nonlinear fourth-order boundary value problems arising in beam analysis. In \cite{Khan2024bvp}, the authors applied linear Legendre multi-wavelets collocation method to solve fourth-order boundary value problems.  Galerkin method coupled with quintic b-splines for solvinf fourth-order boundary value problems was considered in \cite{Viswana2010}. In \cite{Ali2011}, the authors obtained numerical solutions of fourth-order boundary value problems using Haar wavelets. Variational iteration method was used in \cite{Noor2007} to solve fourth-order boundary value problems. Dang et al. \cite{Dang2024} constructed numerical methods of higher order coupled with the trapezoidal quadrature formula for fourth-order boundary value problems. In \cite{Khan2003}, the authors considered exponential solution of  boundary value problems occurring  in the plate decomposition. Modebei \cite{Modebei2020} used block hybrid method for the numerical solutions of fourth-order boundary value problems. Moghadam et al. \cite{Moghadam2022} applied the first type Lidstone collocation method to obtain solution of fourth-order boundary value problems. Decomposition method coupled with Green's function was employed in \cite{Singh2014} to obtain approximate solution of fourth-order boundary value problems. Viswanadham and Ballem \cite{Viswan2013} applied Galerkin method coupled with cubic b-splines to obtain numerical solutions of fourth-order boundary value problems. 

Further discussions on fourth-order boundary value problems can be found in \cite{Erturk2007} (differential transform method and Adomian decomposition method), \cite{Leeb2020} (via integral equations), \cite{Adak2021} (finite difference method), \cite{Hosseini2023} (Adomian decomposition method and homotopy perturbation method), \cite{Palamides2012} (solution funnel approach),   \cite{Kelesoglu2014} (Adomian decomposition method), \cite{Wazwaz2010} (modified Adomian decomposition method).  For several other methods for solving fourth and arbitrary order boundary value problems, see \cite{Bai2007},  \cite{Benaicha2016},  \cite{Li2013},  \cite{Mohanty2000}, \cite{Webb2008}. Existence and uniqueness of solutions of fourth-order boundary value problems were considered in \cite{AgarwalR2023}, \cite{Dimitrov2024}, \cite{Lv2015}, \cite{Ra02024}, \cite{Ravindra2024}, \cite{Xie2013}, \cite{Zhang2022}. For different analytical and numerical methods of solving  singular boundary value problems of Lane-Emden-Fowler type, see  \cite{Ahsan2023},  \cite{AwoEmdenFowler}, \cite{AwoOnuoha},  \cite{Gul2022},  \cite{IqbalMK2020},  \cite{KumarN2023}, \cite{PanditB2021},  \cite{PrantaSSD2023},   \cite{Qayyum2023},  \cite{Shahni2023}, \cite{Shahni2023b}, \cite{SinghR2020},  \cite{Swati2020},   \cite{VermaAK2021}, \cite{VermaAK2021-2}.


In a very recent paper, Abd-Elhameed et al. \cite{AbdElhameed2025} applied the shifted Horadam collocation method to obtain numerical solution to the nonlinear KdV equation. In this article, we apply the shifted Horadam collocation method to obtain  numerical solutions of the  two-point nonlinear fourth-order boundary value problem
\begin{equation}\label{eqlan}
	\begin{split}		
		&	u^{(4)}(x)=f(x,u(x),u'(x),u''(x),u'''(x))\qquad (0<x< 1)\\
		&u(0)=\xi_{1}, \quad u(1)=\eta_{1}, \quad u'(0)=\xi_{2}, \quad u'(1)=\eta_{2}.	
	\end{split}	
\end{equation}
Here $\xi_{i}$ and $\eta_{i}$ $(i=1,2)$ are real constants; and the function $f$ is a sufficiently smooth function. The proposed shifted Horadam collocation method expresses the solution of the boundary value problem \eqref{eqlan} as a shifted Horadam polynomial series. In determining the numerical values of the expansion coefficients of the Horadam polynomial series solution, one uses the zeros of the shifted Horadam polynomials as the collocation points. Collocating at these zeros, the proposed nonlinear boundary value problem is  reduced to a set of nonlinear algebraic equations which are  then solved using Newton's iterative method. The obtained expansion coefficients are in turn substituted into  the shifted Horadam polynomial series to obtain the required numerical solutions. Several examples of the proposed nonlinear boundary value problem are considered to illustrate the  efficiency, accuracy,  and reliability of the proposed method. Numerical solutions and errors obtained are compared with existing solutions. Comparisons of results, which are shown in tables and graphs, clearly reveal that the shifted Horadam collocation method provide better and more accurate results for all the examples considered.

The novelty and motivation of this paper are highlighted as follows:
\begin{enumerate}[(a)]
	\item Scholarly and original contributions to the subject of the proposed fourth-order boundary value problem are presented. 
	\item Six out of the seven illustrative examples of fourth-order boundary value problems considered are nonlinear.
	\item  The paper contributes, in an innovative way, to the study of the Horadam polynomials and presents some novel properties of the Horadam polynomials. New identities involving differentiation formulae for the Horadam polynomials are constructed.
	\item The paper extends the application of the Horadam polynomials to the boundary value problems in nonlinear ordinary differential equations. 
	\item Using the Horadam collocation method, the solutions obtained for all the examples considered outperform the  existing ones in    \cite{Costabile2015}, \cite{Dang2024}, \cite{Hossain2014},  \cite{Khna2021},  \cite{Khan2024bvp},   \cite{Modebei2020}, \cite{Moghadam2022},  \cite{Mohanty2000}, \cite{Noor2007}, \cite{Singh2014}, \cite{Viswan2013}.
\end{enumerate}

\section{Preliminaries}
In this section, we briefly discuss definitions and basic properties of the standard and shifted Horadam polynomials on which the proposed collocation method is based. Special cases of these polynomials are Fibonacci polynomials, Pell polynomials, Lucas polynomials, Pell-Lucas polynomials, Chebyshev polynomials of the first kind, and Chebyshev polynomials of the second kind.   

\subsection{The Horadam Polynomials}\label{Subsec Horadam}	
The $m$th-degree Horadam polynomials (\cite{Horadam1996}, \cite{Horadam1997}, \cite{Horadam1985}, \cite{Wanas2020}) $h_{m}(x)=h^{(\alpha,\beta;a,b)}_{m}(x)$ are defined by the recursive formula
\begin{align}
h_{m}(x)=&\alpha xh_{m-1}(x)+\beta h_{m-2}(x) \qquad (x\in\mathbb{R},m=2,3,\dots) \label{eq rec rel}\\
h_{0}(x)=&a, \qquad h_{1}(x)=b x, \label{eq rec relic}
\end{align}
where $\alpha,\beta;a,b$ are real constants.  The characteristic equation of the difference equation \eqref{eq rec rel} is given by
\begin{equation}
	\lambda^{2}-\alpha x\lambda-\beta=0,
\end{equation}
which has the following real roots.
\begin{equation}
\lambda_{1}=\frac{\alpha x+\sqrt{\alpha^{2}x^{2}+4\beta}}{2}, \qquad \lambda_{2}=\frac{\alpha x-\sqrt{\alpha^{2}x^{2}+4\beta}}{2}.
\end{equation}
Consequently, the Binet's form of the Horadam polynomials $h_{m}(x)$ gives
\begin{equation}
	h_{m}(x)=A_{1}\left( \frac{\alpha x+\sqrt{\alpha^{2}x^{2}+4\beta}}{2}\right)^{m-1} +A_{2}\left( \frac{\alpha x-\sqrt{\alpha^{2}x^{2}+4\beta}}{2}\right)^{m-1}, 
\end{equation}  
where
\begin{equation}
A_{1}=\frac{bx-a\lambda_{2}}{\sqrt{\alpha^{2}x^{2}+4\beta}}, \qquad  A_{1}=\frac{a\lambda_{1}-bx}{\sqrt{\alpha^{2}x^{2}+4\beta}}.
\end{equation} 
As earlier mentioned,  for particular values of the parameters $\alpha,\beta;a,b$, the Horadam polynomials $h_{m}(x)$ reduce to the following special polynomials. 
\begin{enumerate}[(i)]
	\item $a =b = \alpha =\beta =1$ \qquad (Fibonacci polynomials $F_{n}(x)$).
	\item $a = 2$ and   $b = \alpha = \beta =1$ (Lucas polynomials $L_{n}(x)$).
	\item $a = \beta = 1$ and $b = \alpha =2$  (Pell polynomials $P_{n}(x)$).
\item $a = b = \alpha = 2$ and $\beta =1$  (Pell–Lucas polynomials $Q_{n}(x)$).
\item $a=b=1$, $\alpha=2$ and $\beta=-1$     (Chebyshev polynomials  of the first kind $T_{n}(x)$).
\item $a=1,b=\alpha=2$ and $\beta=-1$ (Chebyshev polynomials of the	second kind  $U_{n}(x)$).
\end{enumerate}
The Horadam polynomials $h_{m}(x)$ admit the generating function (\cite{Wanas2020})
\begin{equation}
G(x,t)=	\frac{a+ (b-a \alpha)xt}{1-\alpha x t-\beta t^2}=\sum_{m=0}^{\infty}h_{m}(x)t^{m}.
\end{equation}
The first Horadam polynomials $h_{m}(x)$ are given as follows.
\begin{align}
h_{0}(x)=&a\nonumber\\
h_{1}(x)=&b x\nonumber\\
h_{2}(x)=&a \beta  + b \alpha  x^2\nonumber\\
h_{3}(x)=&\beta   (a \alpha +b)x+\alpha ^2 b x^3\nonumber\\
h_{4}(x)=&a \beta ^2+\alpha  \beta  (a \alpha +2 b) x^2+\alpha ^3 b x^4\nonumber\\
h_{5}(x)=&\beta ^2 (2 a \alpha +b)x +\alpha ^2 \beta   (a \alpha +3 b)x^3+\alpha ^4 b x^5\nonumber\\
h_{6}(x)=&a \beta ^3+3 \alpha  \beta ^2  (a \alpha +b)x^2+\alpha ^3 \beta  x^4 (a \alpha +4 b)+\alpha ^5 b x^6\\
h_{7}(x)=&\beta ^3  (3 a \alpha +b)x+2 \alpha ^2 \beta ^2 (2 a \alpha +3 b) x^3+\alpha ^4 \beta  (a \alpha +5 b)x^5 +\alpha ^6 b x^7\nonumber\\
h_{8}(x)=&a \beta ^4+2 \alpha  \beta ^3 (3 a \alpha +2 b) x^2+5 \alpha ^3 \beta ^2 (a \alpha +2 b)x^4 +\alpha ^5 \beta  (a \alpha +6 b)x^6 +\alpha ^7 b x^8\nonumber\\
h_{9}(x)=&\beta ^4  (4 a \alpha +b)x+10 \alpha ^2 \beta ^3(a \alpha +b) x^3 +3 \alpha ^4 \beta ^2 (2 a \alpha +5 b)x^5 +\alpha ^6 \beta  (a \alpha +7 b)x^7 +\alpha ^8 b x^9.\nonumber
\end{align}

\subsection{A Class of Shifted Horadam Polynomials}\label{Subsec ShiftedHoradam}	
The $m$th-degree shifted Horadam polynomials (\cite{AbdElhameed2025}) $\mathsf{h}_{m}(x):=h^{(\alpha,\beta;a,b)}_{m}(2x-1)$ are defined by the recursive formula
\begin{align}
\mathsf{h}_{m}(x)=&(2x-1)\alpha h_{m-1}(2x-1)+\beta h_{m-2}(2x-1) \qquad (x\in\mathbb{R},m=2,3,\dots) \label{eq rec shifrel}\\
	\mathsf{h}_{0}(x)=&a, \qquad \mathsf{h}_{1}(x)= (2x-1)b, \label{eq rec shifrelic}
\end{align}
where $\alpha,\beta;a,b$ are real constants. Now setting $a=1$, $b=\alpha=-2$, $\beta=-1$, we obtain special  $m$th-degree shifted Horadam polynomials (\cite{AbdElhameed2025}) $\mathsf{H}_{m}(x):=h^{(-2,-1;1,-2)}_{m}(2x-1)$  defined by the recursive formula
\begin{align}
	\mathsf{H}_{m}(x)=&-2(2x-1)\mathsf{H}_{m-1}(x)-\mathsf{H}_{m-2}(x) \qquad (x\in\mathbb{R},m=2,3,\dots) \label{eq rec spshifrel}\\
	\mathsf{H}_{0}(x)=&1, \qquad \mathsf{H}_{1}(x)= -2(2x-1). \label{eq rec spshifrelic}
\end{align}
The shifted Horadam polynomials $	\mathsf{H}_{m}(x)$ admit the generating function (\cite{Wanas2020})
\begin{equation}
	G(x,t)=\frac{1}{t^2+2 (2 x-1)t+1}=\sum_{m=0}^{\infty}	\mathsf{H}_{m}(x)t^{m}.
\end{equation}
The first Horadam polynomials $\mathsf{H}_{m}(x)$ are given as follows.
\begin{align}
	\mathsf{H}_{0}(x)=&1\nonumber\\
	\mathsf{H}_{1}(x)=&-4 x +2\nonumber\\
	\mathsf{H}_{2}(x)=&16 x^2-16 x+3\nonumber\\
	\mathsf{H}_{3}(x)=&-64 x^3+96 x^2-40 x+4\nonumber\\
	\mathsf{H}_{4}(x)=&256 x^4-512 x^3+336 x^2-80 x+5\nonumber\\
	\mathsf{H}_{5}(x)=&-1024 x^5+2560 x^4-2304 x^3+896 x^2-140 x+6\nonumber\\
	\mathsf{H}_{6}(x)=&4096 x^6-12288 x^5+14080 x^4-7680 x^3+2016 x^2-224 x+7\\
	\mathsf{H}_{7}(x)=&-16384 x^7+57344 x^6-79872 x^5+56320 x^4-21120 x^3+4032 x^2-336 x+8\nonumber\\
	\mathsf{H}_{8}(x)=&65536 x^8-262144 x^7+430080 x^6-372736 x^5+183040 x^4-50688 x^3+7392 x^2\nonumber\\
	&-480 x+9\nonumber\\
	\mathsf{H}_{9}(x)=&-262144 x^9+1179648 x^8-2228224 x^7+2293760 x^6-1397760 x^5+512512 x^4\nonumber\\
	&-109824 x^3+12672 x^2-660 x+10\nonumber\\
	\mathsf{H}_{10}(x)=&1048576 x^{10}-5242880 x^9+11206656 x^8-13369344 x^7+9748480 x^6-4472832 x^5\nonumber\\
	&+1281280 x^4-219648 x^3+20592 x^2-880 x+11.
\end{align}
The  Horadam polynomials $\mathsf{H}_{m}(x)$  admits the analytic finite series formulation (\cite{AbdElhameed2025})
\begin{equation}
\mathsf{H}_{m}(x)=-\label{eq hor pol}\frac{1}{2\sqrt{\pi}}\sum_{p=0}^{m}\frac{(1+2m-p)!\Gamma\left( -\frac{1}{2}-m+p\right)}{(m-p)!p!}x^{m-p}  \quad (m=0,1,2,\dots),
\end{equation}
satisfying the pointwise identities
\begin{align}\label{eq sifpo}
	\mathsf{H}_{m}(0)=m+1, \qquad \mathsf{H}_{m}(1)=(-1)^{m}(m+1).
\end{align}

We have the following higher order differentiation formulae for the shifted Horadam polynomials $\mathsf{H}_{m}(x)$.
\begin{proposition}\label{Pro diff form}
	The following differentiation formulae hold $ (m,r=1,2,\dots;m\geq r).$	
	\begin{align}
\mathsf{H}^{(r)}_{m}(x)
=&-\frac{1}{2\sqrt{\pi}}\sum_{p=0}^{m-r}\frac{(1+2m-p)!\Gamma\left( -\frac{1}{2}-m+p\right)(m-p)!}{(m-p)!p!(m-p-r)!}x^{m-p-r} \label{eq diff shifted}\\
\mathsf{H}^{(r)}_{m}(x)
=&-\frac{\Gamma \left(-m-\frac{1}{2}\right) \Gamma (2m+2)}{2 \sqrt{\pi } \Gamma (m-r+1)}\, _2F_1\left(-m-\frac{1}{2},r-m;-2 m-1;\frac{1}{x}\right) x^{m-r}. \label{eq diff shiftedhyp1}
\end{align}
\end{proposition}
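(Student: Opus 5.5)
The plan is to take the finite series \eqref{eq hor pol} as the starting point and differentiate it term by term $r$ times. Write $c_{m,p}=\frac{(1+2m-p)!\,\Gamma\left(-\frac12-m+p\right)}{(m-p)!\,p!}$, so that $\mathsf{H}_m(x)=-\frac{1}{2\sqrt\pi}\sum_{p=0}^{m}c_{m,p}x^{m-p}$.

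\textbf{Step 1: the finite-sum formula \eqref{eq diff shifted}.} Use $\frac{d^r}{dx^r}x^{k}=\frac{k!}{(k-r)!}x^{k-r}$ for $k\ge r$; the derivative is $0$ when $k<r$. Here $k=m-p$, so the terms with $p>m-r$ drop out. The factor $(m-p)!/(m-p-r)!$ then appears exactly as written in \eqref{eq diff shifted}. The $(m-p)!$ in the numerator and denominator cancel formally, but the authors keep both, so the identity is immediate. No convergence issue arises, since the sum is finite.

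\textbf{Step 2: rewriting as a hypergeometric function.} Factor out the $p=0$ term,
\[
-\frac{1}{2\sqrt\pi}\,\frac{(2m+1)!\,\Gamma\left(-m-\frac12\right)}{(m-r)!}\,x^{m-r}.
\]
This matches the prefactor in \eqref{eq diff shiftedhyp1}, using $\Gamma(2m+2)=(2m+1)!$ and $\Gamma(m-r+1)=(m-r)!$. Then compute the term ratio, i.e.\ the $p$-th term divided by the $p=0$ term, using Pochhammer symbols:
\begin{itemize}
\item $\frac{(2m+1-p)!}{(2m+1)!}=\frac{(-1)^p}{(-2m-1)_p}$;
\item $\frac{\Gamma(-\frac12-m+p)}{\Gamma(-\frac12-m)}=\left(-m-\tfrac12\right)_p$;
\item $\frac{(m-r)!}{(m-r-p)!}=(-1)^p(r-m)_p$.
\end{itemize}
The two factors $(-1)^p$ cancel. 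The term ratio is therefore $\frac{(-m-\frac12)_p\,(r-m)_p}{(-2m-1)_p\,p!}\,x^{-p}$, which is the $p$-th term of ${}_2F_1\left(-m-\frac12,r-m;-2m-1;\frac1x\right)$.

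\textbf{Step 3: the main obstacle, the terminating series.} Since $(r-m)_p=0$ for $p>m-r$, the series terminates exactly at the upper limit of \eqref{eq diff shifted}. The delicate point is that the lower parameter $-2m-1$ is a negative integer. This is harmless only because the series is truncated at $p=m-r<2m+1$, so $(-2m-1)_p\neq0$ for every retained term. I would state the convention that ${}_2F_1$ here denotes the terminating polynomial sum up to $p=m-r$, then check the identity on a small case, say $m=r=1$, against $\mathsf{H}_1'(x)=-4$.
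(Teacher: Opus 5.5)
Your proposal is correct and follows the paper's proof. You differentiate \eqref{eq hor pol} term by term $r$ times to get \eqref{eq diff shifted}, and then recognise the resulting terminating sum as ${}_2F_1\left(-m-\frac12,r-m;-2m-1;\frac1x\right)$ times its $p=0$ term. The paper does this second step in one line by quoting $\sum_{p=0}^{k}(-1)^p\binom{k}{p}\frac{(b)_p}{(c)_p}z^p={}_2F_1(-k,b;c;z)$ with $k=m-r$, $b=-m-\frac12$, $c=-2m-1$, $z=1/x$; you instead verify the Pochhammer ratios explicitly and also explain why the negative-integer lower parameter $-2m-1$ is harmless, which the paper leaves implicit.
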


\begin{proof}
Taking the $r$th-order derivative of the power function $x^{m-p}$ in the Horadam polynomials given in \eqref{eq hor pol}, we obtained the result \eqref{eq diff shifted}. On the other hand, using the hypergeometric polynomials $(k=0,1,2,\dots)$
\begin{align}
\sum_{p=0}^{k}(-1)^{p}\binom{k}{p}\frac{(b)_{p}}{(c)_{p}}z^{p}=\, _2F_1\left(-k,b;c;z\right)=\, _2F_1\left(b,-k;c;z\right),
\end{align}
with $k=m-r$, $b=-m-1/2$, $c=-2m-1$, $z=1/x$, we obtain the required result \eqref{eq diff shiftedhyp1}.
\end{proof}

As a consequence of Proposition \ref{Pro diff form}, we have the following results.
\begin{corollary}\label{Cor diff 1}	
The following differentiation property holds $($$r=1,2,\dots$$)$.
\begin{align}
	\mathsf{H}^{(r)}_{m}(0)=&-\frac{\Gamma \left(-r-\frac{1}{2}\right) (m+r+1)!}{2 \sqrt{\pi } (m-r)!},	\label{eq diff shifted1}
\end{align}
with the following special values.
\begin{align}
	\mathsf{H}'_{m}(0)=&-\frac{2(m+2) (m+1) m}{3} \quad (m=1,2,\dots)\\
	\mathsf{H}''_{m}(0)=&\frac{4 (m-1) m (m+1) (m+2) (m+3)}{15} \quad (m=2,3,\dots)\\
	\mathsf{H}'''_{m}(0)=&-\frac{ 8 (m-2) (m-1) m (m+1) (m+2) (m+3) (m+4)}{105}	\quad (m=3,4,\dots)\label{eq diff shifted11}\\
	\mathsf{H}^{(4)}_{m}(0)=&\frac{16 (m-3) (m-2) (m-1) m (m+1) (m+2) (m+3) (m+4) (m+5)}{945},m=4,5,\dots.	\label{eq diff shifted12}
\end{align}
\end{corollary}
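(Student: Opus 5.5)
Evaluate the differentiation formula \eqref{eq diff shifted} of Proposition \ref{Pro diff form} at $x=0$. Every power $x^{m-p-r}$ with $m-p-r>0$ vanishes there, so only the top index $p=m-r$ of the sum survives, where the power is $x^{0}=1$. Substituting $p=m-r$ into the summand gives the coefficient factors
\begin{align*}
(1+2m-p)! &= (m+r+1)!, & \Gamma\left(-\tfrac12-m+p\right) &= \Gamma\left(-r-\tfrac12\right),\\
\frac{(m-p)!}{(m-p)!\,(m-p-r)!} &= 1, & p! &= (m-r)!.
\end{align*}
Hence
\begin{align*}
\mathsf{H}^{(r)}_{m}(0)=-\frac{\Gamma\left(-r-\frac12\right)(m+r+1)!}{2\sqrt{\pi}\,(m-r)!},
\end{align*}
which is \eqref{eq diff shifted1}. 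This holds for $m\ge r$. For $m<r$ the derivative is zero, and this agrees with reading $1/(m-r)!$ as $0$.

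As a sanity check, put $r=0$. Then $\Gamma(-\tfrac12)=-2\sqrt{\pi}$, and the formula gives $(m+1)!/m!=m+1$. This matches $\mathsf{H}_m(0)=m+1$ from \eqref{eq sifpo}.

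For the special values, apply the reflection-type identity
\begin{align*}
\Gamma\left(\tfrac12-n\right)=\frac{(-4)^{n}\,n!\,\sqrt{\pi}}{(2n)!},
\end{align*}
with $n=r+1$. This makes $-\Gamma(-r-\tfrac12)/(2\sqrt\pi)$ equal to $(-1)^{r}2^{r}/(2r+1)!!$. The first values are:
\begin{itemize}
\item $r=1$: $\Gamma(-\tfrac32)=\tfrac43\sqrt{\pi}$, giving prefactor $-\tfrac23$;
\item $r=2$: $\Gamma(-\tfrac52)=-\tfrac{8}{15}\sqrt{\pi}$, giving $\tfrac{4}{15}$;
\item $r=3$: $\Gamma(-\tfrac72)=\tfrac{16}{105}\sqrt{\pi}$, giving $-\tfrac{8}{105}$;
\item $r=4$: $\Gamma(-\tfrac92)=-\tfrac{32}{945}\sqrt{\pi}$, giving $\tfrac{16}{945}$.
\end{itemize}
Finally, $(m+r+1)!/(m-r)!$ is the product of the $2r+1$ consecutive integers $(m-r+1)\cdots(m+r+1)$. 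This yields the four displayed formulas, for example $m(m+1)(m+2)$ when $r=1$.

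The only step needing care is the bookkeeping of the $\Gamma$ values at negative half-integers and their signs. As a further check, the case $r=1$, $m=1$ gives $\mathsf{H}_1'(0)=-4$, which matches $\mathsf{H}_1(x)=-4x+2$.
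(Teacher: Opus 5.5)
Your proposal is correct and follows the paper's proof: set $x=0$ in \eqref{eq diff shifted}, so that only the $p=m-r$ term survives, and this gives \eqref{eq diff shifted1}. Your evaluation of $\Gamma(-r-\tfrac12)$ via $\Gamma(\tfrac12-n)=(-4)^n n!\sqrt{\pi}/(2n)!$ actually derives the four special values, which the paper only lists.
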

\begin{proof}
Writing  \eqref{eq diff shifted} in the form
	\begin{align}	
\mathsf{H}^{(r)}_{m}(x)
	=&-\frac{1}{2\sqrt{\pi}}\frac{(1+m+r)!\Gamma\left( -\frac{1}{2}-r\right)r!}{r!(m-r)!}\nonumber\\
	& -\frac{1}{2\sqrt{\pi}}\sum_{p=0}^{m-r-1}\frac{(1+2m-p)!\Gamma\left( -\frac{1}{2}-m+p\right)(m-p)!}{(m-p)!p!(m-p-r)!}x^{m-p-r} 
\end{align}
and then	setting $x=0$, we obtain the result \eqref{eq diff shifted1}.  
\end{proof}
\begin{corollary}\label{Cor diff 2}
	The following differentiation formula holds $($$r=1,2,\dots$$)$.
	\begin{align}
		\mathsf{H}^{(r)}_{m}(1)=&-\frac{\Gamma \left(- \frac{2 m+1}{2}\right) \Gamma (2 m+2) \Gamma(-m)\Gamma\left( -\frac{1}{2}-r\right) }{2^{2m+3}\pi  \Gamma (m-r+1)\Gamma (-m-r-1)},
		\label{eq diff shifted-1}
	\end{align}
	with the following special values.
	\begin{align}
		\mathsf{H}'_{m}(1)=&-\frac{2 \Gamma (-2 m-1) \Gamma (2 m+2)}{3 \Gamma (-m-2) \Gamma (m)} \quad (m=1,2,\dots)\\
		\mathsf{H}''_{m}(1)=&\frac{4 \Gamma (-2 m-1) \Gamma (2 m+2)}{15 \Gamma (-m-3) \Gamma (m-1)} \quad (m=2,3,\dots)\label{eq diff shifted-11}\\
		\mathsf{H}'''_{m}(1)=&-\frac{8 \Gamma (-2 m-1) \Gamma (2 m+2)}{105 \Gamma (-m-4) \Gamma (m-2)} \quad (m=3,4,\dots) \\
		\mathsf{H}^{(4)}_{m}(1)=&\frac{16 \Gamma (-2 m-1) \Gamma (2 m+2)}{945 \Gamma (-m-5) \Gamma (m-3)} \quad (m=4,5,\dots).\label{eq diff shifted-12}
	\end{align}
	Note that $\Gamma(-m-r)=-(m+r+1)\Gamma(-(m+r+1))$ $(m=r-1,r,r+1,\dots;r=2,3,\dots)$.
\end{corollary}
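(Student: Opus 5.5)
The plan is to evaluate the hypergeometric representation \eqref{eq diff shiftedhyp1} of Proposition \ref{Pro diff form} at $x=1$. There the argument $1/x$ equals $1$, so $x^{m-r}=1$ and
\begin{align*}
\mathsf{H}^{(r)}_{m}(1)=-\frac{\Gamma\left(-m-\frac{1}{2}\right)\Gamma(2m+2)}{2\sqrt{\pi}\,\Gamma(m-r+1)}\,{}_2F_1\left(-m-\tfrac{1}{2},\,r-m;\,-2m-1;\,1\right).
\end{align*}
Since $r-m$ is a nonpositive integer, the ${}_2F_1$ is a terminating polynomial. We can therefore apply the Chu--Vandermonde identity ${}_2F_1(-k,b;c;1)=(c-b)_k/(c)_k$ with $k=m-r$, $b=-m-\tfrac12$ and $c=-2m-1$. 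This gives $c-b=-m-\tfrac12$, so the value is $(-m-\tfrac12)_{m-r}/(-2m-1)_{m-r}$.

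The next step is to convert these Pochhammer symbols into the Gamma quotient displayed in \eqref{eq diff shifted-1}. The numerator is harmless:
\begin{align*}
\left(-m-\tfrac12\right)_{m-r}=\frac{\Gamma\left(-r-\tfrac12\right)}{\Gamma\left(-m-\tfrac12\right)}.
\end{align*}
The denominator is $(-2m-1)_{m-r}=\Gamma(-m-r-1)/\Gamma(-2m-1)$, which involves Gamma at negative integers. I would interpret it as a limit, or equivalently use the reflection form
\begin{align*}
(-n)_k=\frac{(-1)^k n!}{(n-k)!},\qquad n=2m+1,\ k=m-r.
\end{align*}
This yields a finite ratio of factorials. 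The paper's formula keeps the formal quotients $\Gamma(-m)$, $\Gamma(-2m-1)$, $\Gamma(-m-r-1)$, and the remark after the corollary, $\Gamma(-m-r)=-(m+r+1)\Gamma(-(m+r+1))$, signals that these ratios are meant in the limiting (regularized) sense.

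I would then massage the factors $2^{2m+3}\pi$ and $\Gamma(-m)$ into place using the duplication formula
\begin{align*}
\Gamma\left(-m-\tfrac12\right)\Gamma(-m)=2^{2m+2}\sqrt{\pi}\,\Gamma(-2m-1),
\end{align*}
understood as a limit. This matches the stated shape up to the listed factors.

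For the special values $r=1,2,3,4$, substitute $\Gamma(-r-\tfrac12)=-2\sqrt{\pi}\,(-2)^{r}/(2r+1)!!$, which produces the constants $2/3$, $4/15$, $8/105$ and $16/945$. Then simplify with the duplication formula to reach the quotients $\Gamma(-2m-1)\Gamma(2m+2)/(\Gamma(-m-r-1)\Gamma(m-r+1))$.

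As a sanity check, note that $\mathsf{H}_m(1-x)=(-1)^m\mathsf{H}_m(x)$; this follows from the generating function, since $2x-1\mapsto-(2x-1)$ corresponds to $t\mapsto -t$. Hence $\mathsf{H}^{(r)}_m(1)=(-1)^{m+r}\mathsf{H}^{(r)}_m(0)$, which can be compared against Corollary \ref{Cor diff 1}. This symmetry is in fact an alternative and cleaner route to the whole result.

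The main obstacle is making the Gamma-at-negative-integer expressions rigorous and getting the sign and power-of-two bookkeeping right. I would verify the final closed form against the symmetry relation and against the explicit polynomials $\mathsf{H}_4,\mathsf{H}_5$ listed above, evaluating their derivatives at $x=1$.
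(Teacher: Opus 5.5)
Your route is the paper's route: specialize the ${}_2F_1$ form \eqref{eq diff shiftedhyp1} at $x=1$ and sum the series in closed form. Using Chu--Vandermonde rather than Gauss's theorem is actually the sounder choice. The Gamma form used in the paper is meaningless here, because $\Gamma(-2m-1)$ and $\Gamma(-m-r-1)$ sit at poles, whereas your Pochhammer form is finite since $(-2m-1)_{m-r}\neq 0$. Your evaluation correctly yields
\[
\mathsf{H}^{(r)}_m(1)=(-1)^{m+r}\mathsf{H}^{(r)}_m(0)=(-1)^m\,\frac{2^r\,(m+r+1)!}{(2r+1)!!\,(m-r)!}.
\]
The gap is the final step, where you assert that the duplication formula ``understood as a limit'' brings this into the form \eqref{eq diff shifted-1}. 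In limit form the identity reads $\Gamma(-m-\tfrac12+\delta)\Gamma(-m+\delta)=2^{2m+2-2\delta}\sqrt{\pi}\,\Gamma(-2m-1+2\delta)$. So the pole of $\Gamma(-2m-1)$ is approached twice as fast as that of $\Gamma(-m)$. But in your computation $\Gamma(-2m-1)$ is paired with $\Gamma(-m-r-1)$ at equal rates, through $(-2m-1)_{m-r}=\lim_{\epsilon\to0}\Gamma(-m-r-1+\epsilon)/\Gamma(-2m-1+\epsilon)$. Consequently the quotient $\Gamma(-m)/\Gamma(-m-r-1)$ in \eqref{eq diff shifted-1} gives the right answer only if it is read as $\lim_{\epsilon\to0}\Gamma(-m+\epsilon/2)/\Gamma(-m-r-1+\epsilon)$. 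The natural equal-rate reading is the one suggested by the corollary's own remark $\Gamma(-m-r)=-(m+r+1)\Gamma(-m-r-1)$. Under it the quotient equals $(-1)^{r+1}(m+r+1)!/m!$, and \eqref{eq diff shifted-1} returns exactly half the true value. For $m=r=1$ it gives $-2$, whereas $\mathsf{H}_1'(x)=-4$. For $m=4$, $r=1$ it gives $40$, whereas $\mathsf{H}_4'(1)=80$. So the step cannot be carried out as you describe, and the polynomial check you planned would have exposed this. The paper's proof has the same defect, since it silently applies duplication after Gauss's theorem.

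The four special values, by contrast, are correct in the equal-rate sense. They are exactly the pre-duplication expression $-\frac{\Gamma(-r-\frac12)}{2\sqrt{\pi}}\cdot\frac{\Gamma(2m+2)\,\Gamma(-2m-1)}{\Gamma(m-r+1)\,\Gamma(-m-r-1)}$, with $\Gamma(-2m-1)/\Gamma(-m-r-1)=(-1)^{m-r}(m+r+1)!/(2m+1)!$. Obtain them directly from your Chu--Vandermonde output, rather than passing through \eqref{eq diff shifted-1} and undoing the duplication. What you can honestly prove is the finite closed form displayed above. Your symmetry observation $\mathsf{H}_m(1-x)=(-1)^m\mathsf{H}_m(x)$, which follows from $G(1-x,t)=G(x,-t)$, combined with Corollary~\ref{Cor diff 1}, gives it in one line without any Gamma values at poles. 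The general Gamma quotient must then be handled in one of two ways. Either state its regularization explicitly: perturbing $c=-2m-1+\epsilon$ forces $\Gamma(-m+\epsilon/2)$ against $\Gamma(-m-r-1+\epsilon)$. Or, under the equal-rate convention, correct it by replacing $2^{2m+3}$ with $2^{2m+2}$.
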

\begin{proof}
 Setting  $x=1$ in \eqref{eq diff shiftedhyp1} and then using the well-known formula 
	\begin{align}
		\, _2F_1\left(a,b;c;1\right)=\frac{\Gamma(c)\Gamma(c-a-b)}{\Gamma(c-a)\Gamma(c-b)},
	\end{align}
	with $a=-m-1/2$, $b=-m+r$,  $c=-2m-1$, we obtain \eqref{eq diff shifted-1} as required. 
\end{proof}





\begin{proposition}[\cite{AbdElhameed2025}]\label{Pro diff forMAbd}
	The following differentiation formula holds $ (m,r=1,2,\dots;m\geq r).$	
	\begin{align}
	\mathsf{H}^{(r)}_{m}(x)	&=\sum_{p=0}^{m-r}\Theta^{m}_{p,r}	\mathsf{H}_{p}(x), 
	\end{align}
	where
		\begin{align}\label{eq coetheta1}
		\Theta^{m}_{p,r}=	\frac{(-1)^{m-p}2^{2r}(p+1)\left(\frac{m+p-r }{2}+2\right)_{r-1}(r)_{\frac{m-p-r }{2}}}{\left(\frac{m-p-r }{2}\right)!}\theta^{m}_{p,r},
	\end{align}
	with
	\begin{align}\label{eq coetheta2}
		\theta^{m}_{p,r}=\begin{cases}
			1, & m-p-r \,\, \mbox{even},\\
			0, & \mbox{otherwise}.
		\end{cases}
	\end{align}
Here $(x)_{k}=\Gamma(x+k)/\Gamma(x)$ $(k=0,1,2,\dots.)$
\end{proposition}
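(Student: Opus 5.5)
The plan is to identify $\mathsf{H}_{m}$ with a shifted Chebyshev polynomial of the second kind and then use classical Gegenbauer facts. With $a=1$, $b=\alpha=-2$, $\beta=-1$, the recurrence \eqref{eq rec spshifrel}–\eqref{eq rec spshifrelic} is $y_m=2(1-2x)y_{m-1}-y_{m-2}$ with $y_0=1$ and $y_1=2(1-2x)$. This is exactly the Chebyshev recurrence for $U_m$ evaluated at $1-2x$. Hence
\begin{equation*}
\mathsf{H}_{m}(x)=U_{m}(1-2x)=(-1)^{m}U_{m}(2x-1),
\end{equation*}
which agrees with \eqref{eq sifpo}, since $U_m(1)=m+1$. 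By the chain rule, $\mathsf{H}^{(r)}_{m}(x)=(-2)^{r}U^{(r)}_{m}(y)$ with $y=1-2x$. So it suffices to expand $U^{(r)}_{m}(y)$ in the basis $\{U_p(y)\}$, and then every $U_p(y)$ equals $\mathsf{H}_p(x)$ with no further sign change.

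Next I would use two standard Gegenbauer facts. First, $U_m=C^{(1)}_m$ and $\frac{d}{dy}C^{(\lambda)}_n=2\lambda C^{(\lambda+1)}_{n-1}$, so
\begin{equation*}
U^{(r)}_{m}(y)=2^{r}r!\,C^{(r+1)}_{m-r}(y).
\end{equation*}
Second, the Gegenbauer connection formula expresses $C^{(\lambda)}_{n}$ in terms of $C^{(\mu)}_{n-2k}$:
\begin{equation*}
C^{(\lambda)}_{n}(y)=\sum_{k=0}^{\lfloor n/2\rfloor}\frac{(\lambda)_{n-k}\,(\lambda-\mu)_{k}\,(n-2k+\mu)}{\mu\,(\mu)_{n-k+1}\,k!}\,C^{(\mu)}_{n-2k}(y).
\end{equation*}
I would apply it with $\lambda=r+1$, $\mu=1$, $n=m-r$. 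Only indices $p=m-r-2k$ occur, which is exactly the parity condition $\theta^{m}_{p,r}$ in \eqref{eq coetheta2}, and we get $k=(m-p-r)/2$ and $n-2k+\mu=p+1$.

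The last step is bookkeeping of Pochhammer symbols, and I expect it to be the main obstacle, mostly for sign and index conventions. With $(1)_{m-r-k+1}=(m-r-k+1)!$ and $(r)_k$ as the middle factor, the coefficient becomes
\begin{equation*}
(-2)^{r}\,2^{r}r!\,\frac{(r+1)_{m-r-k}\,(r)_k\,(p+1)}{(m-r-k+1)!\,k!}.
\end{equation*}
Now $r!\,(r+1)_{m-r-k}=(m-k)!$, and $m-k=\tfrac{m+p+r}{2}$, $m-r-k+1=\tfrac{m+p-r}{2}+1$. Therefore
\begin{equation*}
\frac{(m-k)!}{(m-r-k+1)!}=\left(\tfrac{m+p-r}{2}+2\right)_{r-1},
\end{equation*}
which is the rising factorial in \eqref{eq coetheta1}. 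For the sign, $(-1)^{r}=(-1)^{m-p}$ precisely when $m-p-r$ is even, which is the only case that survives.

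As a sanity check I would verify $r=1$ against the familiar $U'_m=2\sum_{m-p\ \mathrm{odd}}(p+1)U_p$, which yields $\Theta^{m}_{p,1}=-4(p+1)$, and also $m=2$ using the explicit $\mathsf{H}_2$. If the connection formula is considered too heavy to quote, the fallback is induction on $r$ from the $r=1$ case. That requires proving $\Theta^{m}_{p,r}=\sum_{q}\Theta^{m}_{q,r-1}\Theta^{q}_{p,1}$, which is a finite hypergeometric sum and can be closed by a telescoping (Gosper-type) argument.
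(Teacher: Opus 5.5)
Your proof is correct. The paper gives no argument of its own for this proposition; it is quoted from \cite{AbdElhameed2025}. Your derivation is therefore an independent, self-contained justification rather than a reconstruction.

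The chain checks out. In $y=1-2x$ the recurrence \eqref{eq rec spshifrel}--\eqref{eq rec spshifrelic} is exactly that of $U_m$, so $\mathsf{H}_m(x)=U_m(1-2x)$. Then $\mathsf{H}^{(r)}_m(x)=(-2)^r2^r r!\,C^{(r+1)}_{m-r}(y)$. With $\lambda=r+1$, $\mu=1$ and $k=(m-p-r)/2$, the coefficient reduces to $(-1)^r4^r(p+1)\bigl(\tfrac{m+p-r}{2}+2\bigr)_{r-1}(r)_k/k!$. This is \eqref{eq coetheta1}, since $(-1)^r=(-1)^{m-p}$ on the surviving parity class. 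For example, it gives $\mathsf{H}''_4=192\,\mathsf{H}_2+96\,\mathsf{H}_0$, which direct differentiation confirms.

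There is one slip. The general connection formula you display has a spurious factor $\mu$ in the denominator; the correct denominator is $(\mu)_{n-k+1}\,k!$. The case $n=0$, $\lambda=\mu$ shows this, since your version returns $1/\mu$ instead of $1$. You only use $\mu=1$, so nothing downstream is affected, but fix the display if you quote it.

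The approach the paper's own machinery suggests would be different. It would differentiate the power form \eqref{eq hor pol} termwise, as in Proposition \ref{Pro diff form}, and then re-expand each monomial in the basis $\{\mathsf{H}_p\}$. That needs an inversion formula for $x^j$, which the paper never states, plus the evaluation of a terminating hypergeometric double sum. Your identification with Chebyshev/Gegenbauer polynomials bypasses both. It also explains structurally why only $p\equiv m-r \pmod 2$ occurs (Gegenbauer parity) and why \eqref{eq sifpo} holds.

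The cost is that the Gegenbauer connection formula is imported as a classical black box. Your fallback, induction on $r$ via $\Theta^{m}_{p,r}=\sum_q\Theta^{m}_{q,r-1}\Theta^{q}_{p,1}$, would be the fully elementary alternative, at the price of a Gosper/Zeilberger-type summation.
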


From the differentiation formulae \eqref{eq diff shifted1} and \eqref{eq diff shifted-1}, Proposition \ref{Pro diff forMAbd} and equation \eqref{eq sifpo}, we obtain the following identities given as  corollaries.

%

\begin{corollary}
The following identity holds $ (m,r=1,2,\dots).$	
\begin{align}
&\sum_{p=0}^{m-r}\Theta^{m}_{p,r}	(p+1)=-\frac{\Gamma \left(-r-\frac{1}{2}\right) (m+r+1)!}{2 \sqrt{\pi } (m-r)!},
\end{align}
with the following special values.
\begin{align}
&\sum_{p=0}^{m-1}(-1)^{m-p}	(p+1)^{2}\theta^{m}_{p,1}=-\frac{(m+2) (m+1) m}{6} \quad (m=1,2,\dots)\\
&\sum_{p=0}^{m-2}(-1)^{m-p}	(p+1)^{2}(m-p)(m+p+2)\theta^{m}_{p,2}\nonumber\\
=&\frac{ (m-1) m (m+1) (m+2) (m+3)}{15} \quad (m=2,3,\dots)\\
&\sum_{p=0}^{m-3}(-1)^{m-p}	(p+1)^{2}(p-m-1)(p-m+1)(m+p+1)(m+p+3)\theta^{m}_{p,3}\nonumber\\
=&-\frac{4 (m-2) (m-1) m (m+1) (m+2) (m+3) (m+4)}{105}	\quad (m=3,4,\dots).
\end{align}
Here $	\Theta^{m}_{p,r}$ is as given in \eqref{eq coetheta1}-\eqref{eq coetheta2}.
\end{corollary}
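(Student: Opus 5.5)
The plan is to evaluate the expansion of Proposition \ref{Pro diff forMAbd} at the single point $x=0$ and compare it with the closed form of Corollary \ref{Cor diff 1}. Setting $x=0$ in $\mathsf{H}^{(r)}_{m}(x)=\sum_{p=0}^{m-r}\Theta^{m}_{p,r}\mathsf{H}_{p}(x)$ and using the pointwise identity $\mathsf{H}_{p}(0)=p+1$ from \eqref{eq sifpo} turns the right-hand side into $\sum_{p=0}^{m-r}\Theta^{m}_{p,r}(p+1)$. By \eqref{eq diff shifted1} the left-hand side is $-\Gamma(-r-\tfrac12)(m+r+1)!/(2\sqrt{\pi}(m-r)!)$, and equating the two gives the general identity at once.

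For the special values I will make $\Theta^{m}_{p,r}$ explicit for $r=1,2,3$, writing $k=(m-p-r)/2$. Only the terms with $m-p-r$ even survive, because of $\theta^{m}_{p,r}$, so $k$ is a nonnegative integer. The general factor of $\Theta^{m}_{p,r}$ (beyond the sign and $p+1$) evaluates as follows.
\begin{itemize}
\item For $r=1$: the Pochhammer factor $(\cdot)_0=1$ and $(1)_k/k!=1$, so the factor is $4$. This gives $\Theta^{m}_{p,1}=4(-1)^{m-p}(p+1)\theta^{m}_{p,1}$.
\item For $r=2$: $(\tfrac{m+p+2}{2})_1=\tfrac{m+p+2}{2}$ and $(2)_k/k!=k+1=\tfrac{m-p}{2}$. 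So $\Theta^{m}_{p,2}=4(-1)^{m-p}(p+1)(m-p)(m+p+2)\theta^{m}_{p,2}$.
\item For $r=3$: $a=\tfrac{m+p+1}{2}$ gives $(a)_2=a(a+1)=\tfrac{(m+p+1)(m+p+3)}{4}$. Also $(3)_k/k!=\tfrac{(k+1)(k+2)}{2}=\tfrac{(m-p-1)(m-p+1)}{8}$. With $2^{6}=64$, the constant is $64/32=2$, and $(m-p-1)(m-p+1)=(p-m+1)(p-m-1)$.
\end{itemize}

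The three right-hand sides come from the special values listed in Corollary \ref{Cor diff 1}: $-\tfrac{2}{3}m(m+1)(m+2)$, then $\tfrac{4}{15}(m-1)\cdots(m+3)$, and $-\tfrac{8}{105}(m-2)\cdots(m+4)$. Dividing by the constants $4$, $4$ and $2$ respectively yields exactly the three stated identities.

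The argument is essentially bookkeeping, and I expect the only delicate step to be the Pochhammer simplification for $r=3$. There the half-integer arguments must be handled correctly, and one must check that restricting to $m-p-r$ even makes $k$ an integer, so that $(r)_k/k!=\binom{k+r-1}{r-1}$ is legitimate. I will also sanity-check one small case, e.g.\ $m=1$ with $r=1$, where the sum is $-1=-\tfrac{3\cdot2\cdot1}{6}$, to confirm the overall sign conventions.
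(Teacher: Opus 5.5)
Your proposal is correct and follows the paper's own argument: evaluate the expansion of Proposition \ref{Pro diff forMAbd} at $x=0$, use $\mathsf{H}_{p}(0)=p+1$ from \eqref{eq sifpo}, and compare with \eqref{eq diff shifted1}. Your Pochhammer reductions, which give $\Theta^{m}_{p,r}$ constants $4$, $4$ and $2$ for $r=1,2,3$, check out, and they are exactly the steps the paper leaves implicit when it derives the three special values.
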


\begin{corollary}
	The following identity holds  $ (m,r=1,2,\dots).$	
	\begin{align}
		&\sum_{p=0}^{m-r}\Theta^{m}_{p,r}	(-1)^{p}(p+1)=-\frac{\Gamma \left(- \frac{2 m+1}{2}\right) \Gamma (2 m+2) \Gamma(-m)\Gamma\left( -\frac{1}{2}-r\right) }{2^{2m+3}\pi  \Gamma (m-r+1)\Gamma (-m-r-1)},
		\label{eq diff shifted-d1}
	\end{align}
	with the following special values.
	\begin{align}
		&\sum_{p=0}^{m-1}(-1)^{m}	(p+1)^{2}\theta^{m}_{p,1}=-\frac{\Gamma (-2 m-1) \Gamma (2 m+2)}{6 \Gamma (-m-2) \Gamma (m)} \quad (m=1,2,\dots)\\
		&\sum_{p=0}^{m-2}(-1)^{m}	(p+1)^{2}(m-p)(m+p+2)\theta^{m}_{p,2}\nonumber\\
		=&\frac{ \Gamma (-2 m-1) \Gamma (2 m+2)}{15 \Gamma (-m-3) \Gamma (m-1)} \quad (m=2,3,\dots)\label{eq diff shifted-11de}\\
		&\sum_{p=0}^{m-3}(-1)^{m}	(p+1)^{2}(p-m-1)(p-m+1)(m+p+1)(m+p+3)\theta^{m}_{p,3}\nonumber\\
		=&-\frac{4 \Gamma (-2 m-1) \Gamma (2 m+2)}{105 \Gamma (-m-4) \Gamma (m-2)} \quad (m=3,4,\dots) 
	\end{align}
	Here $	\Theta^{m}_{p,r}$ is as given in \eqref{eq coetheta1}-\eqref{eq coetheta2}.
\end{corollary}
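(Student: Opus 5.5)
The plan is to evaluate the expansion of Proposition \ref{Pro diff forMAbd} at the endpoint $x=1$ and compare it with the closed form of Corollary \ref{Cor diff 2}. This mirrors the proof of the preceding corollary, which used $x=0$.

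First, set $x=1$ in $\mathsf{H}^{(r)}_{m}(x)=\sum_{p=0}^{m-r}\Theta^{m}_{p,r}\mathsf{H}_{p}(x)$. By the pointwise identity \eqref{eq sifpo}, $\mathsf{H}_{p}(1)=(-1)^{p}(p+1)$, so the right-hand side becomes $\sum_{p=0}^{m-r}\Theta^{m}_{p,r}(-1)^{p}(p+1)$. The left-hand side is $\mathsf{H}^{(r)}_{m}(1)$, which Corollary \ref{Cor diff 2}, formula \eqref{eq diff shifted-1}, gives in closed form. Equating the two yields the general identity \eqref{eq diff shifted-d1} immediately.

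Next, for the special values with $r=1,2,3$, I will substitute the explicit form \eqref{eq coetheta1} of $\Theta^{m}_{p,r}$ and simplify.

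\begin{itemize}
\item For $r=1$: $(\cdot)_{0}=1$ and $(1)_{k}/k!=1$, so $\Theta^{m}_{p,1}=4(-1)^{m-p}(p+1)\theta^{m}_{p,1}$. Multiplying by $(-1)^{p}(p+1)$ gives the summand $4(-1)^{m}(p+1)^{2}\theta^{m}_{p,1}$. Dividing the special value of $\mathsf{H}'_{m}(1)$ listed in Corollary \ref{Cor diff 2} by $4$ produces the stated right-hand side with denominator $6$.
\item For $r=2$: $\bigl(\frac{m+p}{2}+1\bigr)_{1}=\frac{m+p+2}{2}$ and $(2)_{k}/k!=k+1$ with $k=\frac{m-p-2}{2}$, so $k+1=\frac{m-p}{2}$. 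Hence $\Theta^{m}_{p,2}=4(-1)^{m-p}(p+1)(m-p)(m+p+2)\theta^{m}_{p,2}$. Dividing $\mathsf{H}''_{m}(1)$ by $4$ gives the factor $1/15$.
\item For $r=3$: $\bigl(\frac{m+p+1}{2}\bigr)_{2}=\frac{(m+p+1)(m+p+3)}{4}$ and $(3)_{k}/k!=\frac{(k+1)(k+2)}{2}$ with $k=\frac{m-p-3}{2}$, which gives $\frac{(m-p-1)(m-p+1)}{8}$. Then $\Theta^{m}_{p,3}=2(-1)^{m-p}(p+1)(m+p+1)(m+p+3)(m-p-1)(m-p+1)\theta^{m}_{p,3}$. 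The product $(m-p-1)(m-p+1)$ equals $(p-m-1)(p-m+1)$, and dividing $\mathsf{H}'''_{m}(1)=-\frac{8\cdots}{105}$ by $2$ yields $-\frac{4\cdots}{105}$.
\end{itemize}

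The main obstacle is the right-hand side of \eqref{eq diff shifted-1}, which involves Gamma functions at negative integers, namely $\Gamma(-m)$, $\Gamma(-2m-1)$ and $\Gamma(-m-r-1)$. These must be read as limits or ratios: $\Gamma(-2m-1)/\Gamma(-m-r-1)$ is finite via the reflection formula $\Gamma(-n+\epsilon)\sim(-1)^{n}/(n!\,\epsilon)$, using the ratio convention indicated by the note after Corollary \ref{Cor diff 2}. I will simply take that corollary as given. For the special values I only need the routine Pochhammer simplifications above. As a sanity check, I would verify the $r=1$ case against $\mathsf{H}_{m}(1)=(-1)^{m}(m+1)$ by computing $\mathsf{H}'_{1}(1)=-4$ directly: the sum is $4(-1)(1)=-4$, which matches.
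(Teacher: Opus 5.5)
Your route is the paper's own: evaluate Proposition~\ref{Pro diff forMAbd} at $x=1$ using $\mathsf{H}_p(1)=(-1)^p(p+1)$, then compare with Corollary~\ref{Cor diff 2}. Your Pochhammer reductions of $\Theta^{m}_{p,r}$ for $r=1,2,3$ are all correct. Because you divide the \emph{listed} special values of $\mathsf{H}^{(r)}_m(1)$, which are written with $\Gamma(-2m-1)$, the three special identities come out right.

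The problem is the general identity, and it is exactly the obstacle you set aside. The right side of \eqref{eq diff shifted-1} contains the pole ratio $\Gamma(-m)/\Gamma(-m-r-1)$, not $\Gamma(-2m-1)/\Gamma(-m-r-1)$. Under the convention $\Gamma(z+1)=z\Gamma(z)$ stated after Corollary~\ref{Cor diff 2}, this ratio equals $(-1)^{r+1}(m+1)(m+2)\cdots(m+r+1)$. Combined with $\Gamma(-m-\frac12)(2m+1)!=(-1)^{m+1}2^{2m+1}m!\sqrt{\pi}$, the right side reduces to $(-1)^{m+r+1}\Gamma(-r-\frac12)(m+r+1)!/\bigl(4\sqrt{\pi}(m-r)!\bigr)$. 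That is exactly half of $\mathsf{H}^{(r)}_m(1)$.

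Concretely, for $m=r=1$ the left side of \eqref{eq diff shifted-d1} is $\Theta^{1}_{0,1}=-4$, while the right side is $-2$. Your sanity check compared the sum with the true $\mathsf{H}'_1(1)$, not with the closed form, so it missed this. The factor $2$ enters when the duplication formula is applied to $\Gamma(-2m-1)$ at a pole. So taking Corollary~\ref{Cor diff 2} as given imports an error into the general identity.

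To close the argument, evaluate $\mathsf{H}^{(r)}_m(1)$ independently. Since $\mathsf{H}_m(x)=(-1)^mU_m(2x-1)$, you have $\mathsf{H}_m(1-x)=(-1)^m\mathsf{H}_m(x)$, hence $\mathsf{H}^{(r)}_m(1)=(-1)^{m+r}\mathsf{H}^{(r)}_m(0)$. Then \eqref{eq diff shifted1} gives the correct right-hand side $(-1)^{m+r+1}\Gamma(-r-\frac12)(m+r+1)!/\bigl(2\sqrt{\pi}(m-r)!\bigr)$. Equivalently, this is the undoubled Gauss form $-\Gamma(2m+2)\Gamma(-2m-1)\Gamma(-r-\frac12)/\bigl(2\sqrt{\pi}\,\Gamma(m-r+1)\Gamma(-m-r-1)\bigr)$, read with $\Gamma(-2m-1)/\Gamma(-m-r-1)=1/(-2m-1)_{m-r}$. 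Its cases $r=1,2,3$ are precisely the special values you proved.
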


Let 
\begin{equation}
	\mathcal{H}_{N}=\mbox{span}\left\lbrace\mathsf{H}_{m}(x):0\leq m\leq N \right\rbrace. 
\end{equation}
Any function $u=u(x)\in \mathcal{H}_{N}$  may
be expressed in terms of the shifted Horadam polynomials as
\begin{equation}\label{eq soly}
	u_{N}(x)=\sum_{m=0}^{N}c_{m}\mathsf{H}_{m}(x) \qquad (N\in\mathbb{N}),	
\end{equation}
where the coefficients $c_{m}$ $(0\leq m\leq N)$ are to be determined.


\section{Horadam  Collocation Method of Solution} \label{Sec SHCM}

This section presents the  proposed collocation method based on the shifted Horadam polynomials for solving the fourth-order boundary value problem \eqref{eqlan}.  In this method, one assumes that  the solution $u(x)$ can be expressed as a shifted Horadam polynomial series formulation \eqref{eq soly}. Collocating at the $N-3$ zeros of the shifted Horadam polynomials $\mathsf{H}_{N-3}(x)$, and upon applying the four boundary conditions in \eqref{eqlan}, we obtain a set of $N+1$ nonlinear algebraic equations. These algebraic equations are then solved for  the $N+1$ unknown expansion coefficients  using Newton's iterative method.  


To this end,  the fourth-order nonlinear boundary value problem under consideration is given by
\begin{equation}\label{eqf lepic}
	\begin{split}		
		&	u^{(4)}(x)=f(x,u(x),u'(x),u''(x),u'''(x))\qquad (0<x< 1)\\
		&u(0)=\xi_{1}, \quad u(1)=\eta_{1} \quad u'(0)=\xi_{2}, \quad u'(1)=\eta_{2}.	
	\end{split}
\end{equation}
 We apply the Horadam collocation method by assuming that the solution of the boundary value problem \eqref{eqf lepic}	 can be expressed as the shifted Horadam polynomial series formulation
\begin{align}\label{eq soluv}
	\begin{split}
		u_{N}(x)=&\sum_{m=0}^{N}c_{m}\mathsf{H}_{m}(x) \qquad (N\in\mathbb{N}),
	\end{split}
\end{align}
where the shifted Horadam expansion coefficients $c_{m}$ $(m=0,1,2,\dots,N)$ are to be determined. Substituting the series solution \eqref{eq soluv} into the differential equation in \eqref{eqf lepic}, we have
\begin{equation}\label{eqf lepic1}
	\begin{split}		
		&\sum_{m=0}^{N}c_{m}\mathsf{H}^{(4)}_{m}(x) =f\left( x,\sum_{m=0}^{N}c_{m}\mathsf{H}_{m}(x) ,\sum_{m=0}^{N}c_{m}\mathsf{H}'_{m}(x) ,\sum_{m=0}^{N}c_{m}\mathsf{H}''_{m}(x) ,\sum_{m=0}^{N}c_{m}\mathsf{H}'''_{m}(x) \right),
	\end{split}
\end{equation}
satisfying the boundary conditions
\begin{align}
	&\sum_{m=0}^{N}c_{m}\mathsf{H}_{m}(0)=\xi_{1}, \quad \sum_{m=0}^{N}c_{m}\mathsf{H}_{m}(1)=\eta_{1}\\
	&\sum_{m=0}^{N}c_{m}\mathsf{H}'_{m}(0)= \xi_{2}, \qquad 	\sum_{m=0}^{N}c_{m}\mathsf{H}'_{m}(1) =\eta_{2}.	
\end{align}
Using the properties \eqref{eq sifpo} and the   differentiation formulae in Proposition \ref{Pro diff form}, Corollaries \ref{Cor diff 1} and \ref{Cor diff 2}, we obtain
\begin{align}\label{eqf lepic2-1}
		&-\frac{1}{2\sqrt{\pi}}\sum_{m=4}^{N}c_{m}\sum_{p=0}^{m-4}\frac{(1+2m-p)!\Gamma\left( -\frac{1}{2}-m+p\right)(m-p)!}{(m-p)!p!(m-p-4)!}x^{m-p-4}\nonumber\\
		& =f\left( x,-\frac{1}{2\sqrt{\pi}}\sum_{m=0}^{N}c_{m}\sum_{p=0}^{m}\frac{(1+2m-p)!\Gamma\left( -\frac{1}{2}-m+p\right)}{(m-p)!p!}x^{m-p} ,\right.\nonumber\\
		&\left.-\frac{1}{2\sqrt{\pi}}\sum_{m=1}^{N}c_{m}\sum_{p=0}^{m-1}\frac{(1+2m-p)!\Gamma\left( -\frac{1}{2}-m+p\right)(m-p)!}{(m-p)!p!(m-p-1)!}x^{m-p-1},\right.\nonumber\\
		&\left.-\frac{1}{2\sqrt{\pi}}\sum_{m=2}^{N}c_{m}\sum_{p=0}^{m-2}\frac{(1+2m-p)!\Gamma\left( -\frac{1}{2}-m+p\right)(m-p)!}{(m-p)!p!(m-p-2)!}x^{m-p-2},\right.\nonumber\\
		&\left.-\frac{1}{2\sqrt{\pi}}\sum_{m=3}^{N}c_{m}\sum_{p=0}^{m-3}\frac{(1+2m-p)!\Gamma\left( -\frac{1}{2}-m+p\right)(m-p)!}{(m-p)!p!(m-p-3)!}x^{m-p-3} \right),
\end{align}
satisfying the boundary conditions
\begin{align}\label{eq bccolo}
	&\sum_{m=0}^{m}(m+1)c_{m}=\xi_{1}, \quad 	\sum_{m=0}^{N}(-1)^{m}(m+1)c_{m}=\eta_{1}\nonumber\\
	&-\frac{2}{3}\sum_{m=1}^{N}(m+2) (m+1)mc_{m}= \xi_{2}, \qquad 	-\frac{2}{3}\sum_{m=1}^{N}\frac{ \Gamma (-2 m-1) \Gamma (2 m+2)}{\Gamma (-m-2) \Gamma (m)}c_{m} =\eta_{2}.	
\end{align}
%
Equation  \eqref{eqf lepic2-1} is satisfied exactly at the collocation points $\mathsf{x}_{q,N}$ $(0\leq q\leq N-4)$ which are zeros of the  shifted Horadam polynomials $\mathsf{H}_{N-3}(x)$. Thus we have a collocation scheme $(0\leq q\leq N-4)$
\begin{align}\label{eqf lepic3}
		&-\frac{1}{2\sqrt{\pi}}\sum_{m=4}^{N}c_{m}\sum_{p=0}^{m-4}\frac{(1+2m-p)!\Gamma\left( -\frac{1}{2}-m+p\right)(m-p)!}{(m-p)!p!(m-p-4)!}\left( \mathsf{x}_{q,N}\right)^{m-p-4}\nonumber\\
& =f\left( \mathsf{x}_{q,N},-\frac{1}{2\sqrt{\pi}}\sum_{m=0}^{N}c_{m}\sum_{p=0}^{m}\frac{(1+2m-p)!\Gamma\left( -\frac{1}{2}-m+p\right)}{(m-p)!p!}\left( \mathsf{x}_{q,N}\right)^{m-p} ,\right.\nonumber\\
&\left.-\frac{1}{2\sqrt{\pi}}\sum_{m=1}^{N}c_{m}\sum_{p=0}^{m-1}\frac{(1+2m-p)!\Gamma\left( -\frac{1}{2}-m+p\right)(m-p)!}{(m-p)!p!(m-p-1)!}\left( \mathsf{x}_{q,N}\right)^{m-p-1},\right.\nonumber\\
&\left.-\frac{1}{2\sqrt{\pi}}\sum_{m=2}^{N}c_{m}\sum_{p=0}^{m-2}\frac{(1+2m-p)!\Gamma\left( -\frac{1}{2}-m+p\right)(m-p)!}{(m-p)!p!(m-p-2)!}\left( \mathsf{x}_{q,N}\right)^{m-p-2},\right.\nonumber\\
&\left.-\frac{1}{2\sqrt{\pi}}\sum_{m=3}^{N}c_{m}\sum_{p=0}^{m-3}\frac{(1+2m-p)!\Gamma\left( -\frac{1}{2}-m+p\right)(m-p)!}{(m-p)!p!(m-p-3)!}\left( \mathsf{x}_{q,N}\right)^{m-p-3} \right).
\end{align}
Hence, the collocation scheme \eqref{eqf lepic3} together with the four initial-boundary collocation schemes \eqref{eq bccolo} reduces  the given boundary value problem \eqref{eqf lepic} to a set of $(N+1)$ nonlinear algebraic equations for $(N+1)$ unknown  shifted Horadam expansion coefficients $c_{m}$ $(m=0,1,2,\dots,N)$.  These algebraic equations are then solved using Newton's iterative method.

\paragraph*{Convergence and error analysis.} The convergence and error analysis of the present collocation method is discussed extensively in \cite{AbdElhameed2025}.

\section{Illustrative Examples} 
In this section, we  compute numerical solutions of several examples of the fourth-order boundary value problem \eqref{eqlan} using the Horadam polynomials collocation algorithm presented in Section \ref{Sec SHCM}. All the examples considered are nonlinear except Example \ref{Example 4.1}. We use Wolfram Mathematica 12.0 software for our  computations.

To this end, 
we define the absolute error, AE, by 
\begin{align}\label{eqekey 5}
	\begin{split}
		\left|u_{{\rm ex.}}(x) -u_{N}(x)\right|
	\end{split}
\end{align}
and the maximum absolute error, MAE, by
\begin{equation}
	\max\limits_{x\in(0,1)}\left|u_{{\rm ex.}}(x)-u_{N}(x) \right|. 
\end{equation}
Here $u_{{\rm ex.}}(x)$  represents the exact solution  and 
$u_{N}(x)$ denotes the approximate solution.  Our results are compared with results in a recent article of Dang et al. \cite{Dang2024} and other notable existing results. In all the examples considered, we set $N=16$. 

%
%

\begin{example}\label{Example 4.1}
	Consider  the nonlinear boundary value problem  (\cite{Costabile2015},  \cite{Dang2024}, \cite{Erturk2007}, \cite{Hosseini2023}, \cite{Moghadam2022}, \cite{Momani2006},  \cite{Noor2007}, \cite{Singh2014}, \cite{Viswan2013})
	\begin{equation}\label{eqflan 34th-11}
		\begin{split}		
			&u^{(4)}(x)-u''(x)- u(x) =(x-3)e^{x} \qquad (0<x< 1)
			\\
			&u(0)=0,\quad u(1)=0, \quad u'(0)=0, \quad u'(1)=-e.
		\end{split}
	\end{equation}
	The exact solution  is $u(x)=(1-x)e^{x}$.

Specialising the shifted Horadam collocation algorithm in Section \ref{Sec SHCM} to this example, we obtain the expansion coefficients of the shifted Horadam polynomial series as follows:	
\begin{align}
c_0=&0.745183, \quad c_1=0.227862, \quad c_2= -0.0801293, \quad c_3= 0.0109772\nonumber\\
c_4=&-0.000954681,\quad c_5=0.0000611606, \quad c_6= -3.10\times 10^{-6}, \quad c_7= 1.30\times 10^{-7}\nonumber\\
 c_8=& 4.71\times 10^{-9}, \quad c_9= 1.48\times 10^{-10}, \quad c_{10}= -4.14\times 10^{-12}, \quad c_{11}=1.03\times 10^{-13}\nonumber\\
 c_{12}= &-2.37\times 10^{-15}, \quad c_{13}= 4.95\times 10^{-17}, \quad c_{14}= -9.55\times 10^{-19}, \quad c_{15}= 1.71\times 10^{-20}\nonumber\\
 c_{16}=&-2.84\times 10^{-22}.	
\end{align}	
Hence, we obtain the approximate Horadam polynomial series solution
\begin{align}
u(x)=&1.00000-1.60\times 10^{-18} x-0.50000 x^2-0.333333 x^3-0.12500 x^4-0.0333333 x^5\nonumber\\
&-0.00694444 x^6-0.00119048 x^7-0.000173611 x^8-0.0000220459 x^9\nonumber\\
&-2.48\times 10^{-6} x^{10}-2.50\times 10^{-7} x^{11}-2.29\times 10^{-8} x^{12}-1.91\times 10^{-9} x^{13}\nonumber\\
&-1.54\times 10^{-10} x^{14}-8.60\times 10^{-12} x^{15}-1.22\times 10^{-12} x^{16}.
\end{align}
\end{example}

Table \ref{table Example 4.1solution} shows the comparison of the exact solution, solution obtained from the present method, and the variational iteration method \cite{Noor2007}. Comparison of the absolute error and maximum absolute error of the present method and other existing methods is presented in Table \ref{table Example 4.1error}. It is observed from Table \ref{table Example 4.1error} and the comparative analysis in \cite{Dang2024} that, for the same value of $N$, the present method outperforms the methods presented in \cite{Dang2024} and \cite{Costabile2015}. The graphs of the comparison of the exact and SHC solutions are plotted in Figure \ref{fig:Example4.1a} and the graph of the absolute errors is shown in Figure \ref{fig:Example4.1b}.

\begin{table}[H]
	\caption {Comparison of exact solution with the present method and VIM (\cite{Noor2007}) for Example \ref{Example 4.1}.}
	\centering 
	\begin{tabular}{|c| c| c |c|} 
		\hline
		$x$ &Exact Solution&Present Method &Variational IM \cite{Noor2007}
		\\ [0.5ex] 
		\hline 
		$0.1$ & $0.9946538262680830$&$ 0.9946538262680832$& $0.9946538264$ 
		\\
		\hline
		$0.2$ & $0.9771222065281360$&$ 0.9771222065281364$& $0.9771222072$ 
		\\
		\hline
		$0.3$ & $0.9449011653032021$&$ 0.9449011653032028$& $0.9449011333$ 
		\\
		\hline
		$0.4$ & $0.8950948185847621$&$ 0.8950948185847627$& $0.8950948205$ 
		\\
		\hline
		$0.5$ & $0.8243606353500641$&$ 0.8243606353500647$& $0.8243606378$  
		\\
		\hline
		$0.6$ &$0.7288475201562036$&$ 0.7288475201562042$&$0.7288475228$  
		\\
		\hline
		$0.7$ & $0.6041258122411430$&$ 0.6041258122411437$& $0.6041258147$ 
		\\
		\hline
		$0.8$ & $0.4451081856984935$&$ 0.4451081856984941$& $0.4451081875$ 
		\\
		\hline
		$0.9$ & $0.2459603111156949$&$ 0.2459603111156956$& $0.2489603118$ 
		\\
		\hline
	\end{tabular}
	\label{table Example 4.1solution} 
\end{table}

\begin{landscape}
\begin{table}[H]
	\caption {Comparison of absolute errors of the present method  and the methods presented in \cite{Costabile2015},  \cite{Dang2024}, 
		\cite{Hosseini2023}, \cite{Moghadam2022}, 
\cite{Noor2007} 
 for Example \ref{Example 4.1}.}
	\centering 
	\begin{tabular}{|c| c| c |c|c|c|c|} 
		\hline
		$x$ & SHCM $(N=16)$&Method  \cite{Dang2024} &Method  \cite{Hosseini2023}&Method  \cite{Moghadam2022}&Method  \cite{Costabile2015}& Method \cite{Noor2007} $(N=15)$ 
		\\ [0.5ex] 
		\hline 
		$0.1$ & $3.53\times10^{-16}$ & -&$7.11\times10^{-8}$& $7.69\times10^{-10}$ & $6.33\times10^{-9}$  & $2.00\times10^{-10}$ 
		\\
		\hline
		$0.2$ & $3.75\times10^{-16}$ &-&$9.62\times10^{-8}$& $1.46\times10^{-9}$ & $7.50\times10^{-9}$  & $7.01\times10^{-10}$ 
		\\
		\hline
		$0.3$ & $5.84\times10^{-16}$ &-& $2.10\times10^{-7}$&$2.02\times10^{-9}$ & $3.12\times10^{-10}$ &  $1.35\times10^{-9}$ 
		\\
		\hline
		$0.4$ & $6.08\times10^{-16}$ &-& $1.16\times10^{-7}$&$3.39\times10^{-9}$ & $3.91\times10^{-9}$ &  $2.00\times10^{-9} $ 
		\\
		\hline
		$0.5$ & $6.04\times10^{-16}$ &-& $9.30\times10^{-8}$&$2.54\times10^{-9}$  &$9.89\times10^{-10}$  & $2.51\times10^{-9}$  
		\\
		\hline
		$0.6$ &$6.96\times10^{-16}$  &-&$1.21\times10^{-7}$&$2.44\times10^{-9}$  & $1.75\times10^{-9}$ & $2.72\times10^{-9}$ 
		\\
		\hline
		$0.7$ & $6.77\times10^{-16}$ &-& $3.19\times10^{-7}$&$2.11\times10^{-9}$ & $2.91\times10^{-9}$ & $2.21\times10^{-9}$  
		\\
		\hline
		$0.8$ & $5.30\times10^{-16}$ &-& $2.33\times10^{-7}$&$1.55\times10^{-9}$ & $1.01\times10^{-8}$  & $1.80\times10^{-9}$ 
		\\
		\hline
		$0.9$ & $5.99\times10^{-16}$ &-&$6.99\times10^{-8}$& $8.27\times10^{-10}$ & $7.80\times10^{-9}$  & $7.25\times10^{-10}$ 
		\\
		\hline
		\hline
		MAE &$6.96\times10^{-16}$ & $3.26\times 10^{-14}(N=16)$ &$3.19\times10^{-7}$&$3.39\times 10^{-9}$ & $1.01\times 10^{-8}$   & $2.72\times 10^{-9}$  
		\\
	&&$1.29\times 10^{-16}(N=32)$&&  
	&&
	\\ [0.5ex] 	
		\hline	
	\end{tabular}
	\label{table Example 4.1error} 
\end{table}
\end{landscape}
\begin{figure}[h!]
	\begin{center}
		\subfigure[Exact and numerical (SHCM) solutions.]{%
	\label{fig:Example4.1a}
	\includegraphics[width=0.475\textwidth]{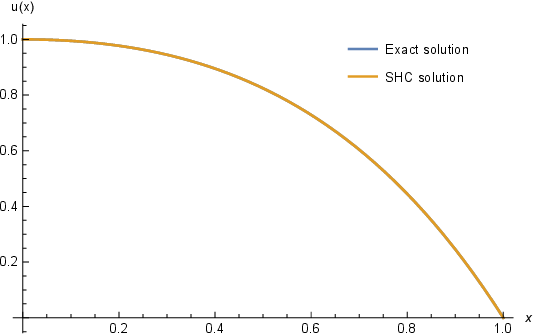}
}
\subfigure[Absolute errors of SHCM.]{%
	\label{fig:Example4.1b}
	\includegraphics[width=0.475\textwidth]{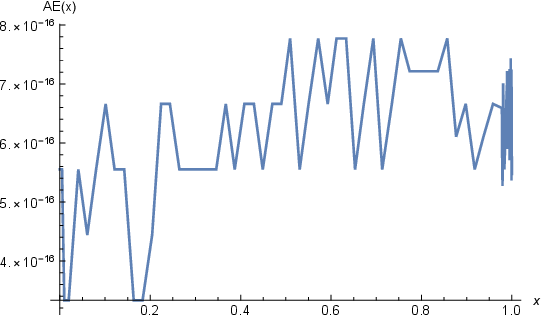}
}
	\end{center}
	\caption{Comparison of exact and numerical solutions for Example \ref{Example 4.1}.
	}%
	\label{fig:Example 4.1}
\end{figure}

\begin{example}\label{Example 4.2}
	Consider  the nonlinear boundary value problem  (\cite{Moghadam2022}, \cite{Singh2014})
	\begin{equation}\label{eqflan 3th-11}
		\begin{split}		
			&u^{(4)}(x)=e^{-x} u^{2}(x) \qquad (0<x< 1)
			\\
			&u(0)=1,\quad u(1)=e, \quad u'(0)=1, \quad u'(1)=e.
		\end{split}
	\end{equation}
	The exact solution  is $u(x)=e^{x}$.

Narrowing down the shifted Horadam collocation algorithm in Section \ref{Sec SHCM} to the boundary value problem \eqref{eqflan 3th-11}, we get the expansion coefficients 	
\begin{align}
	c_0=&1.70078, \quad c_1=-0.420835, \quad c_2=0.0523326, \quad c_3= -0.00434749\nonumber\\
	c_4=&0.000271154,\quad c_5=-0.0000135376, \quad c_6= 5.63438\times 10^{-7}, \quad c_7= -2.01054\times 10^{-8}\nonumber\\
	c_8=& 6.27856\times 10^{-10}, \quad c_9=-1.74306\times 10^{-11}, \quad c_{10}=4.35558\times 10^{-13}\nonumber\\ c_{11}=&-9.89508\times 10^{-15}, \quad 
	c_{12}= 2.06076\times 10^{-16}, \quad c_{13}=-3.96172\times 10^{-18}\nonumber\\ c_{14}=&7.07533\times 10^{-20}, \quad c_{15}=-1.20753\times 10^{-21}, \quad
	c_{16}=1.97687\times 10^{-23}.	
\end{align}	
Thus one has the following approximate solution
\begin{align}
	u(x)=&1.00000+1.00000 x+0.50000 x^2+0.166667 x^3+0.0416667 x^4+0.00833333 x^5\nonumber\\
	&+0.00138889 x^6+0.000198413 x^7+0.0000248016 x^8+2.75573\times 10^{-6} x^9\nonumber\\
	&+2.75572\times 10^{-7} x^{10}+2.50527\times 10^{-8} x^{11}+2.08741\times 10^{-9} x^{12}\nonumber\\
	&+1.60432\times 10^{-10} x^{13}+1.17359\times 10^{-11} x^{14}+6.17327\times 10^{-13} x^{15}\nonumber\\
	&+8.49057\times 10^{-14} x^{16}.
\end{align}
		
\end{example}

Table \ref{table Example 4.2} illustrates the comparison between the exact solution, SHC solution, as well as absolute errors obtained from the present SHCM and other existing methods. It is clearly  observed that the present method performs better than the method in a recent paper \cite{Moghadam2022}, and other existing methods in \cite{Singh2014}. In Figure \ref{fig:Example4.2a}, we plot the graphs of the exact and SHC solutions, while Figure \ref{fig:Example4.2b} presents the graph of the absolute errors of the present method. 

\begin{table}[H]
	\caption {Comparison of absolute errors of the present method  and those in \cite{Moghadam2022}, \cite{Singh2014} for Example \ref{Example 4.2}.}
	\centering 
	\begin{tabular}{|c| c| c |c|c|c|} 
		\hline
	$x$ &Exact Solution&SHC Method & SHC Method &Method \cite{Moghadam2022} &Method \cite{Singh2014} 
		\\ [0.5ex] 
		\hline 
		$0.1$ &$1.1051709180756477$&$1.1051709180756477$& $8.38491\times10^{-17}$ & -& - 
		\\
				\hline 
		$0.2$ &$1.2214027581601699$&$1.2214027581601699$& $1.37748\times10^{-17}$ & $2.6534\times10^{-10}$ & $3.5093\times10^{-8}$  
		\\
				\hline 
	$0.3$ &$1.3498588075760032$&$1.3498588075760030$& $1.15942\times10^{-16}$ & - & -  
	\\
		\hline
		$0.4$ &$1.4918246976412703$&$ 1.4918246976412703$& $2.77127\times10^{-17}$ & $4.3282\times10^{-10}$ & $8.2634\times10^{-8}$  
		\\
		\hline
		$0.5$ &$1.6487212707001282$&$ 1.6487212707001282$& $9.28217\times10^{-17}$ & - & -  
\\
\hline
		$0.6$ & $1.8221188003905089$&$ 1.8221188003905090$&$1.76347\times10^{-17}$  &$4.4097\times10^{-10}$  & $8.2376\times\times10^{-8}$   
		\\
	\hline
	$0.7$ & $2.0137527074704766$&$ 2.0137527074704760$&$2.94333\times10^{-16}$  &- & -  
	\\
		\hline
		$0.8$ &$2.2255409284924680$&$ 2.2255409284924680$&$2.22131\times10^{-17}$  & $2.7962\times10^{-10}$ & $3. 4819\times\times10^{-8}$   
		\\
			\hline
	$0.9$ &$2.4596031111569500$&$ 2.4596031111569494$&$1.84535\times10^{-16}$  & - & -  
	\\
	\hline
\hline
MAE&&&$2.94333\times10^{-16}$&$4.4097\times10^{-10}$&$8.2634\times10^{-8}$ 
\\
		\hline
	\end{tabular}
	\label{table Example 4.2} 
\end{table}

\begin{figure}[h!]
	\begin{center}
		\subfigure[Exact and numerical (SHCM) solutions.]{%
			\label{fig:Example4.2a}
			\includegraphics[width=0.475\textwidth]{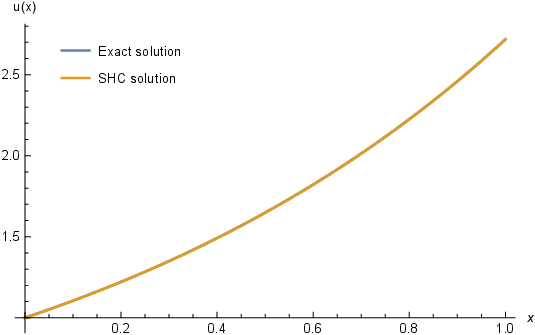}
		}
		\subfigure[Absolute errors of SHCM.]{%
			\label{fig:Example4.2b}
			\includegraphics[width=0.475\textwidth]{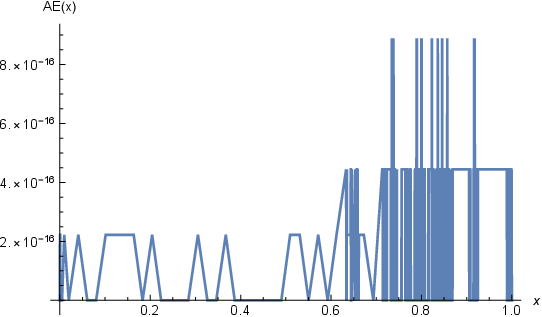}
		}
	\end{center}
	\caption{Comparison of exact and numerical solutions for Example \ref{Example 4.2}.
	}%
	\label{fig:Example 4.2}
\end{figure}

\begin{example}\label{Example 4.3}
Consider  the nonlinear boundary value problem  (\cite{Dang2024},  \cite{Hossain2014}, \cite{Hosseini2023}, \cite{Modebei2020}, \cite{Noor2007})
	\begin{equation}\label{eqflan 34th-1}
		\begin{split}		
			&u^{(4)}(x)+ (u''(x))^{2} =\sin x+ \sin^{2}x \qquad (0<x< 1)
			\\
			&u(0)=0,\quad u(1)=\sin 1, \quad u'(0)=1, \quad u'(1)=\cos 1.
		\end{split}
	\end{equation}
	The exact solution  is $u(x)=\sin x$. The shifted Horadam collocation algorithm in Section \ref{Sec SHCM} gives the following  expansion coefficients 	for this example.
\begin{align}
	c_0=&0.464599, \quad c_1=-0.21486, \quad c_2=-0.0147494, \quad c_3=0.00225695\nonumber\\
	c_4=&0.0000772223,\quad c_5=-7.07827\times 10^{-6}, \quad c_6=-1.613\times 10^{-7}, \quad c_7= 1.05541\times 10^{-8}\nonumber\\
	c_8=& 1.80304\times 10^{-10}, \quad c_9=-9.17312\times 10^{-12}, \quad c_{10}=-1.25342\times 10^{-13}\nonumber\\ c_{11}=&5.21656\times 10^{-15}, \quad 
	c_{12}=5.93916\times 10^{-17}, \quad c_{13}=-2.09131\times 10^{-18}\nonumber\\ c_{14}=&-2.04009\times 10^{-20}, \quad c_{15}=6.22061\times 10^{-22}, \quad
	c_{16}=4.73335\times 10^{-24}.	
\end{align}	
Hence, the approximate series solution yields
\begin{align}
	u(x)=&1.73179\times 10^{-17}+1.00000 x+2.00010\times 10^{-17} x^2-0.166667 x^3+1.44927\times 10^{-18} x^4\nonumber\\
	&+0.00833333 x^5+6.25754\times 10^{-17} x^6-0.000198413 x^7+3.05910\times 10^{-15} x^8\nonumber\\
	&+2.75573\times 10^{-6} x^9+2.73963\times 10^{-14} x^{10}-2.50521\times 10^{-8} x^{11}\nonumber\\
	&+1.20147\times 10^{-13} x^{12}+1.60441\times 10^{-10} x^{13}+1.24003\times 10^{-13} x^{14}\nonumber\\
	&-8.30569\times 10^{-13} x^{15}+2.03295\times 10^{-14} x^{16}.
\end{align}

\end{example}

Table \ref{table Example 4.3solution} shows the comparison of the exact solution, solution obtained from the present method, and the variational iteration method \cite{Noor2007}. Comparison of the absolute error and maximum absolute error of the present method and other existing methods is presented in Table \ref{table Example 4.3error}. It is seen from Table \ref{table Example 4.3error} and the comparative analysis in \cite{Dang2024} that, for the same value of $N$, the present method provides better approximations compared to  the methods presented in \cite{Dang2024}, \cite{Costabile2015}, and \cite{Noor2007}. The graphs of the comparison of the exact and SHC solutions are plotted in Figure \ref{fig:Example4.3a} and the graph of the absolute errors is shown in Figure \ref{fig:Example4.3b}.

\begin{table}[H]
	\caption {Comparison of exact solution with the present method and VIM (\cite{Noor2007}) for Example \ref{Example 4.3}.}
	\centering 
	\begin{tabular}{|c| c| c |c|} 
		\hline
		$x$ &Exact Solution&Present Method &Variational IM \cite{Noor2007}
		\\ [0.5ex] 
		\hline 
		$0.1$ & $0.09983341664682815$&$ 0.09983341664682817$& $0.0998334945$ 
		\\
		\hline
		$0.2$ & $0.19866933079506122$&$ 0.19866933079506124$& $0.1986696031$ 
		\\
		\hline
		$0.3$ & $0.29552020666133955$&$ 0.29552020666133950$& $0.2955207315$ 
		\\
		\hline
		$0.4$ & $0.38941834230865050$&$ 0.38941834230865047$& $0.3894191196$ 
		\\
		\hline
		$0.5$ & $0.47942553860420300$&$ 0.47942553860420295$& $0.4794265100$  
		\\
		\hline
		$0.6$ &$0.56464247339503540$&$ 0.56464247339503510$&$0.5646435236$  
		\\
		\hline
		$0.7$ & $0.64421768723769100$&$ 0.64421768723769080$& $0.6442186501$ 
		\\
		\hline
		$0.8$ & $0.71735609089952280$&$ 0.71735609089952270$& $0.7173567749$ 
		\\
		\hline
		$0.9$ & $0.78332690962748340$&$ 0.78332690962748330$& $0.7833271803$ 
		\\
		\hline
	\end{tabular}
	\label{table Example 4.3solution} 
\end{table}

\begin{table}[H]
	\caption {Comparison of absolute errors of the present method  and those in \cite{Costabile2015}, \cite{Dang2024},  \cite{Hosseini2023},  \cite{Noor2007} for Example \ref{Example 4.3}.}
	\centering 
	\begin{tabular}{|c| c| c |c|c|c|} 
		\hline
		$x$ & SHCM $(N=16)$&Method \cite{Dang2024} &Method \cite{Hosseini2023}&Method \cite{Costabile2015}& Method \cite{Noor2007} $(N=11)$ 
		\\ [0.5ex] 
		\hline 
		$0.1$ & $1.38\times10^{-17}$ & -& $5.11\times10^{-9}$ & $4.45\times10^{-10}$  & $7.79\times10^{-8}$ 
		\\
		\hline
		$0.2$ & $2.7\times10^{-17}$ &-& $3.05\times10^{-8}$ & $5.54\times10^{-10}$  & $2.72\times10^{-7}$ 
		\\
		\hline
		$0.3$ & $5.55\times10^{-17}$ &-& $7.18\times10^{-8}$ & $8.95\times10^{-11}$ &  $5.25\times10^{-7}$ 
		\\
		\hline
		$0.4$ & $5.55\times10^{-17}$ &-& $1.07\times10^{-7}$ & $2.03\times10^{-10}$ &  $7.77\times10^{-7}$ 
		\\
		\hline
		$0.5$ & $0.00000$ &-& $1.22\times10^{-7}$  &$3.32\times10^{-11}$  & $9.71\times10^{-7}$  
		\\
		\hline
		$0.6$ &$2.22\times10^{-16}$  &-&$1.06\times10^{-7}$  & $1.53\times10^{-10}$ & $1.05\times10^{-6}$ 
		\\
		\hline
		$0.7$ & $1.11\times10^{-16}$ &-& $6.08\times10^{-8}$ & $9.48\times10^{-11}$ & $9.63\times10^{-7}$  
		\\
		\hline
		$0.8$ & $1.11\times10^{-16}$ &-& $1.31\times10^{-8}$ & $5.18\times10^{-10}$  & $6.84\times10^{-7}$ 
		\\
		\hline
		$0.9$ & $1.11\times10^{-16}$ &-& $6.35\times10^{-9}$ & $4.15\times10^{-10}$  & $2.71\times10^{-7}$ 
		\\
		\hline
		\hline
		MAE &$2.22\times10^{-16}$ & $3.26\times 10^{-14}(N=16)$ &$1.22\times 10^{-7}$ & $5.54\times 10^{-10}$   & $1.05\times 10^{-6}$  
		\\
	&&$1.29\times 10^{-16}(N=32)$&  
	&&
	\\ [0.5ex] 	
			\hline	
	\end{tabular}
	\label{table Example 4.3error} 
\end{table}

\begin{figure}[h!]
	\begin{center}
		\subfigure[Exact and numerical (SHCM) solutions.]{%
			\label{fig:Example4.3a}
			\includegraphics[width=0.475\textwidth]{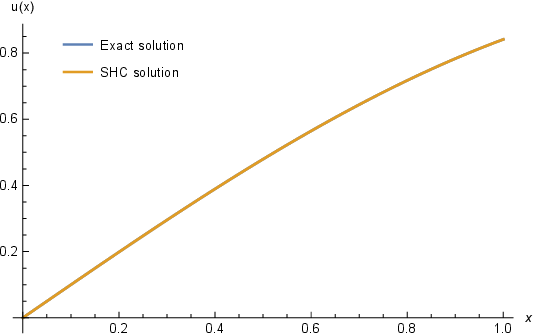}
		}
		\subfigure[Absolute errors of SHCM.]{%
			\label{fig:Example4.3b}
			\includegraphics[width=0.475\textwidth]{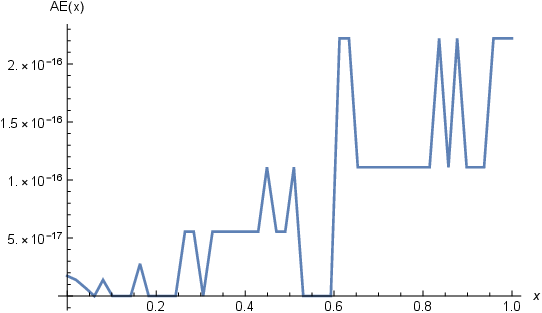}
		}
	\end{center}
	\caption{Comparison of exact and numerical solutions for Example \ref{Example 4.3}.
	}%
	\label{fig:Example 4.3}
\end{figure}

\begin{example}\label{Example 4.4}
Given  the nonlinear boundary value problem (\cite{Dang2024})
	\begin{equation}\label{eqflanbvp 34th-1}
		\begin{split}		
			&u^{(4)}(x)-\frac{1}{8}[u(x)u'''(x)-u'(x)u''(x)] =\pi^{4}\sin \pi x		\qquad (0<x<1)	\\
			&u(0)=0,\quad u(1)=0, \quad u'(0)=\pi, \quad u'(1)=-\pi.
		\end{split}
	\end{equation}
	The exact solution  is $u(x)=\sin \pi x$. Applying the shifted Horadam collocation algorithm in Section \ref{Sec SHCM} to the problem \eqref{eqflanbvp 34th-1} one has the following  expansion coefficients.
	\begin{align}
		c_0=&0.721703, \quad c_1=-1.39538\times 10^{-18}, \quad c_2=-0.263698, \quad c_3=1.30957\times 10^{-18}\nonumber\\
		c_4=&0.0142944,\quad c_5=-4.92535\times 10^{-19}, \quad c_6=-0.0003017, \quad c_7= 4.06408\times 10^{-20}\nonumber\\
		c_8=& 3.37546\times 10^{-6}, \quad c_9=3.86895\times 10^{-20}, \quad c_{10}=-2.33758\times 10^{-8}\nonumber\\ c_{11}=& -2.4645\times 10^{-20}, \quad 
		c_{12}=1.10047\times 10^{-10}, \quad c_{13}=7.82427\times 10^{-21}\nonumber\\ c_{14}=&-3.75048\times 10^{-13}, \quad c_{15}=-1.13349\times 10^{-21}, \quad
		c_{16}=9.61907\times 10^{-16}.	
	\end{align}	
It follows that the approximate shifted Horadam polynomial series solution gives
	\begin{align}
		u(x)=&-3.45153\times 10^{-17}+3.14159 x-2.59574\times 10^{-15} x^2-5.16771 x^3\nonumber\\
		&+4.25152\times 10^{-11} x^4+2.55016 x^5+1.487019\times 10^{-8} x^6-0.599265 x^7\nonumber\\
		&+6.45350\times 10^{-7} x^8+0.0821434 x^9+6.76038\times 10^{-6} x^{10}-0.00738424 x^{11}\nonumber\\
		&+0.0000209969 x^{12}+0.00044265 x^{13}+0.0000193915 x^{14}-0.0000330509 x^{15}\nonumber\\
		&+4.13135\times 10^{-6} x^{16}.
	\end{align}

\end{example}

In Table \ref{table Example 4.4}, we illustrate the comparison of the exact solution, SHC solution, and the method presented in \cite{Dang2024}. It is  observed from the absolute errors obtained that the present method performs better than the method in a recent paper \cite{Dang2024}. In Figure \ref{fig:Example4.4a}, we plot the graphs of the exact and SHC solutions, while Figure \ref{fig:Example4.4b} presents the graph of the absolute errors of the present method.

\begin{table}[H]
	\caption {Comparison of solutions and absolute errors of the present method  and those in \cite{Dang2024} for Example \ref{Example 4.4}.}
	\centering 
	\begin{tabular}{|c| c| c |c|c|} 
		\hline
		$x$ &Exact Solution &Present Method& SHCM $(N=16)$& Dang et al.  \cite{Dang2024} 
		\\ [0.5ex] 
		\hline 
		$0.1$ &$0.30901699437494744$&$ 0.3090169943749475$& $5.55112\times10^{-17}$ &-
		\\
		\hline
		$0.2$ &$0.5877852522924731$&$ 0.5877852522924732$& $1.11022\times10^{-16}$ & - 
		\\
		\hline
		$0.3$ &$0.8090169943749475$&$ 0.8090169943749472$& $2.22045\times10^{-16}$  &- 
		\\
		\hline
		$0.4$ &$0.9510565162951535$&$ 0.9510565162951536$&$1.11022\times10^{-16}$  &- 
		\\
		\hline
		$0.5$ &$1.0000000000000000$&$ 1.0000000000000002$&$2.22045\times10^{-16}$  &- 
		\\
		\hline
		$0.6$ &$0.9510565162951536$&$ 0.9510565162951536$&$0.00000$  & - 
		\\
		\hline
		$0.7$ &$0.8090169943749475$&$ 0.8090169943749478$&$3.33067\times10^{-16}$  &-
		\\
		\hline
		$0.8$ &$0.5877852522924732$&$ 0.5877852522924734$&$2.22045\times10^{-16}$  &-
		\\\hline
		$0.9$ &$0.3090169943749475$&$ 0.3090169943749473$&$1.66533\times10^{-16}$  &- 
		\\	
		\hline
		MAE&&&$3.33067\times10^{-16}$  & $9.3762\times10^{-11}(N=16)$ 
	\\
&&&  
&$3.6743\times 10^{-13}(N=32)$
\\ [0.5ex] 	
&&&  
&$1.4352\times 10^{-15}(N=64)$
\\ [0.5ex] 	
\hline
	\end{tabular}
	\label{table Example 4.4} 
\end{table}

\begin{figure}[h!]
	\begin{center}
		\subfigure[Exact and numerical (SHCM) solutions.]{%
			\label{fig:Example4.4a}
			\includegraphics[width=0.475\textwidth]{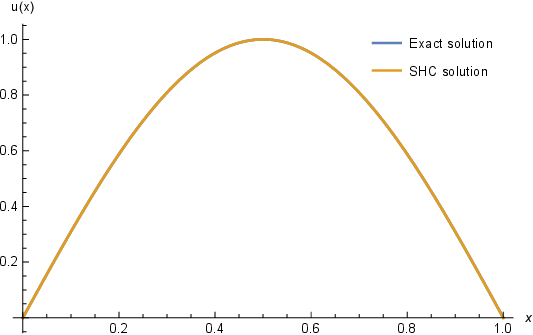}
		}
		\subfigure[Absolute errors of SHCM.]{%
			\label{fig:Example4.4b}
			\includegraphics[width=0.475\textwidth]{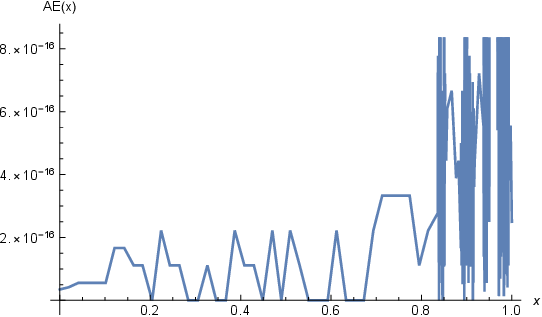}
		}
	\end{center}
	\caption{Comparison of exact and numerical solutions for Example \ref{Example 4.4}.
	}%
	\label{fig:Example 4.4}
\end{figure}

\begin{example}\label{Example 4.5}
Consider the nonlinear boundary value problem (\cite{Dang2024}, \cite{Mohanty2000})
	\begin{equation}\label{eqflanbvp 34th-2}
		\begin{split}		
			&u^{(4)}(x)-u(x)[u'(x)+u''(x)+u'''(x)] =16e^{2x}-14e^{4x} \qquad (0<x<1)	\\
			&u(0)=1,\quad u(1)=e^{2}, \quad u'(0)=2, \quad u'(1)=2e^{2}.
		\end{split}
	\end{equation}
	The exact solution  is $u(x)=e^{2x}$.  Applying the shifted Horadam collocation algorithm in Section \ref{Sec SHCM} to the problem \eqref{eqflanbvp 34th-1} one has the following  expansion coefficients.
	\begin{align}
		c_0=&3.07252, \quad c_1=-1.47600\times 10^{-18}, \quad c_2= 0.36156, \quad c_3=-0.0595221\nonumber\\
		c_4=&0.00737913,\quad c_5=-0.000733566, \quad c_6=0.0000608598, \quad c_7= -4.33212\times 10^{-6}\nonumber\\
		c_8=&2.70009\times 10^{-7}, \quad c_9=-1.49665\times 10^{-8}, \quad c_{10}=7.46916\times 10^{-10}\nonumber\\ c_{11}=& -3.38965\times 10^{-11}, \quad 
		c_{12}=1.41041\times 10^{-12}, \quad c_{13}=-5.41819\times 10^{-14}\nonumber\\ c_{14}=&1.93304\times 10^{-15}, \quad c_{15}=-6.46686\times 10^{-17}, \quad
		c_{16}=2.02037\times 10^{-18}.	
	\end{align}	
Hence, we obtain the approximate series solution 
	\begin{align}
		u(x)=&1.00000+2.00000 x+2. x^2+1.33333 x^3+0.666667 x^4+0.266667 x^5+0.0888889 x^6\nonumber\\
		&+0.0253968 x^7+0.00634921 x^8+0.00141091 x^9+0.000282237 x^{10}\nonumber\\
		&+0.0000512186 x^{11}+8.66243\times 10^{-6} x^{12}+1.21529\times 10^{-6} x^{13}+2.50305\times 10^{-7} x^{14}\nonumber\\
		&+1.80762\times 10^{-11} x^{15}+8.67742\times 10^{-9} x^{16}.
	\end{align}

\end{example}

Table \ref{table Example 4.5} demonstrates the comparison of the exact solution, SHC solution, and the absolute errors of the present method, and the methods presented in \cite{Dang2024} and \cite{Mohanty2000}. One clearly sees that the maximum absolute error of the present method is smaller than the ones in \cite{Mohanty2000} and  a recent paper \cite{Dang2024}. The graphs of the comparison of the exact and SHC solutions are plotted in Figure \ref{fig:Example4.5a} and the graph of the absolute errors is shown in Figure \ref{fig:Example4.5b}.

\begin{landscape}
\begin{table}[H]
	\caption {Comparison of absolute errors of the present method  and those in \cite{Dang2024}, \cite{Mohanty2000} 
		for Example \ref{Example 4.5}.}
	\centering 
	\begin{tabular}{|c| c| c |c|c|c|} 
		\hline
		$x$ &Exact Solution&Present Solution&  SHCM $(N=16)$&Dang et al. \cite{Dang2024} &Mohanty \cite{Mohanty2000}  
		\\ [0.5ex] 
		\hline 
		$0.1$ &$1.2214027581601699$&$ 1.2214027581601696$& $1.54076\times10^{-16}$ &-&- 
		\\
		\hline
		$0.2$ &$1.4918246976412703$&$ 1.4918246976412703$& $4.13656\times10^{-17}$ & -&- 
		\\
		\hline
		$0.3$ &$1.8221188003905089$&$ 1.8221188003905089$& $9.06793\times10^{-17}$  &-&- 
		\\
		\hline
		$0.4$ &$2.2255409284924680$&$ 2.2255409284924683$&$9.48465\times10^{-17}$  &-&- 
		\\
		\hline
		$0.5$ &$2.7182818284590450$&$ 2.7182818284590460$&$3.91848\times10^{-16}$  &- &-
		\\
		\hline
		$0.6$ &$3.3201169227365472$&$ 3.3201169227365477$&$8.21202\times10^{-16}$  & -&- 
		\\
		\hline
		$0.7$ &$4.0551999668446745$&$ 4.0551999668446745$&$2.33266\times10^{-16}$  &-&-
		\\
		\hline
		$0.8$ &$4.9530324243951150$&$ 4.9530324243951160$&$1.35460\times10^{-15}$  &-&-
		\\\hline
		$0.9$ &$6.0496474644129465$&$ 6.0496474644129465$&$8.02310\times10^{-16}$  &-&- 
		\\	
		\hline
		MAE&&&$1.35460\times10^{-15}$  & $7.9444\times10^{-7}(N=16)$ & $3. 1510\times10^{-6}(N=16)$   
	\\
&&&&  
$6.3686\times 10^{-8}(N=32)$&$2.0070\times 10^{-7}(N=32)$
\\ [0.5ex] 	
&&&&$5.1595\times 10^{-9}(N=64)$  
&
\\ [0.5ex] 	
&&&&$3.2485\times 10^{-10}(N=128)$  
&
\\ [0.5ex] 	
\hline	
\end{tabular}
	\label{table Example 4.5} 
\end{table}
\end{landscape}

\begin{figure}[h!]
	\begin{center}
		\subfigure[Exact and numerical (SHCM) solutions.]{%
			\label{fig:Example4.5a}
			\includegraphics[width=0.475\textwidth]{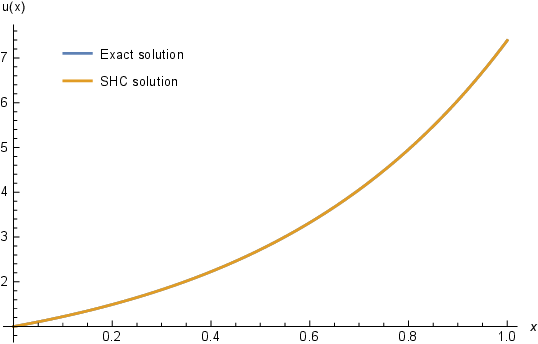}
		}
		\subfigure[Absolute errors of SHCM.]{%
			\label{fig:Example4.5b}
			\includegraphics[width=0.475\textwidth]{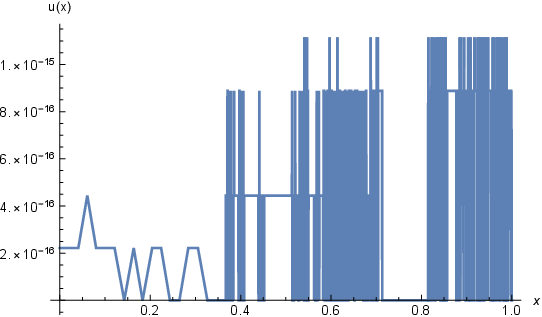}
		}
	\end{center}
	\caption{Comparison of exact and numerical solutions for Example \ref{Example 4.5}.
	}%
	\label{fig:Example 4.5}
\end{figure}

\begin{example}\label{Example 4.6}
Consider the nonlinear boundary value problem (\cite{Dang2024},  \cite{Hajji2008}, \cite{Khna2021}, \cite{Modebei2020}, \cite{Mustafa2017}, \cite{Khan2012bvp})
	\begin{equation}\label{eqflanbvp 34th-3}
		\begin{split}		
			&u^{(4)}(x)-6e^{-4u(x)}=-12(1+x)^{-4}  \qquad (0<x<1) \\
			&u(0)=0,\quad u(1)=\ln 2, \quad u'(0)=1, \quad u'(1)=1/2.
		\end{split}
	\end{equation}
	The exact solution  is $u(x)=\ln(1+x)$. Indeed, specialising the shifted Horadam collocation algorithm in Section \ref{Sec SHCM} to the problem \eqref{eqflanbvp 34th-3}, we obtain the  expansion coefficients
	\begin{align}
		c_0=&0.391171, \quad c_1=-0.169889, \quad c_2=  -0.014502, \quad c_3=-0.00165381\nonumber\\
		c_4=&-0.000212386,\quad c_5=-0.0000291101, \quad c_6= -4.15762\times 10^{-6}, \quad c_7= -6.10919\times 10^{-7}\nonumber\\
		c_8=&-9.16535\times 10^{-8}, \quad c_9=-1.39703\times 10^{-8}, \quad c_{10}= -2.15626\times 10^{-9}\nonumber\\ c_{11}=& -3.36235\times 10^{-10}, \quad 
		c_{12}= -5.2849\times 10^{-11}, \quad c_{13}=-8.31468\times 10^{-12}\nonumber\\ c_{14}=&-1.33251\times 10^{-12}, \quad c_{15}= -2.42543\times 10^{-13}, \quad
		c_{16}=-3.73346\times 10^{-14}.	
	\end{align}	
	Hence, we obtain the approximate series solution 
	\begin{align}
		u(x)=&-1.41971\times 10^{-17}+1. x-0.50000 x^2+0.333333 x^3-0.25000 x^4+0.199998 x^5\nonumber\\
		&-0.166638 x^6+0.142633 x^7-0.123814 x^8+0.106658 x^9-0.087677 x^{10}\nonumber\\
		&+0.0649674 x^{11}-0.0405681 x^{12}+0.0198425 x^{13}-0.00697111 x^{14}+0.00154324 x^{15}\nonumber\\
		&-0.000160351 x^{16}.
	\end{align}
\end{example}

Table \ref{table Example 4.6solution} illustrates the comparison of the exact solution, SHC solution, and the subdivision method in \cite{Mustafa2017}. In Table \ref{table Example 4.6}, we present the comparison of absolute errors of the present method, and the methods presented in  \cite{Dang2024},  \cite{Hajji2008}, \cite{Khna2021}, \cite{Khan2012bvp}, \cite{Mustafa2017}. It is clearly observed  that the present method gives the smallest maximum absolute error.  In Figure \ref{fig:Example4.6a}, we plot the graphs of the exact and SHC solutions, while Figure \ref{fig:Example4.6b} presents the graph of the absolute errors of the present method.

\begin{table}[H]
	\caption {Comparison of exact solution, SHC solution, and solution in  \cite{Mustafa2017} for Example \ref{Example 4.6}.}
	\centering 
	\begin{tabular}{|c| c| c |c|} 
		\hline
		$x$ &Exact Solution& SHC Solution &Method  \cite{Mustafa2017}
		\\ [0.5ex] 
		\hline 
		$0.1$ & $0.0953101798043249$&$ 0.0953101798043314$&$0.0950147533$
		\\
		\hline
		$0.2$ & $0.1823215567939546$&$ 0.1823215567940249$&$0.1814496227$
		\\
		\hline
		$0.3$ & $0.2623642644674911$&$ 0.2623642644674305$&$0.2609546573$
		\\
		\hline
		$0.4$ &$0.3364722366212129$&$ 0.3364722366212644$&$0.3347370220$
		\\
		\hline
		$0.5$ &$0.4054651081081644$&$ 0.4054651081081879$&$0.4036840381$
		\\
		\hline
		$0.6$ &$0.4700036292457356$&$ 0.4700036292456766$&$0.4684459279$
		\\
		\hline
		$0.7$ &$0.5306282510621704$&$ 0.5306282510622377$& $ 0.5294932609$
		\\
		\hline
		$0.8$ &$0.5877866649021191$&$ 0.5877866649020731$&$0.5871580370$
		\\\hline
		$0.9$ &$0.6418538861723947$&$ 0.6418538861723968$ &$ 0.6416636708$
		\\	
		\hline
	\end{tabular}
	\label{table Example 4.6solution} 
\end{table}

\begin{landscape}
	\begin{table}[H]
	\caption {Comparison of absolute errors of the present method  and those in \cite{Dang2024},  \cite{Hajji2008}, \cite{Khna2021}, \cite{Khan2012bvp}, \cite{Mustafa2017} for Example \ref{Example 4.6}.}
	\centering 
	\begin{tabular}{|c| c| c |c|c|c|c|} 
		\hline
		$x$ &  SHCM $(N=16)$& Method \cite{Dang2024} &Method \cite{Khna2021}& Method \cite{Khan2012bvp} & Method \cite{Hajji2008}& Method \cite{Mustafa2017}
		\\ [0.5ex] 
		\hline 
		$0.1$ & $6.46705\times10^{-15}$ &-& -  & - &-&$ 2.954\times10^{-4}$
		\\
		\hline
		$0.2$ & $7.03326\times10^{-16}$ & - & -  &-& $2.438\times10^{-11}$&$8.719\times10^{-4}$
		\\
		\hline
		$0.3$ & $6.05627\times10^{-14}$  &- &-   & -&-&$1.409\times10^{-3}$
		\\
		\hline
		$0.4$ &$5.14588\times10^{-14}$  &- &  - &-&$3.943\times10^{-12}$&$1.735\times10^{-3}$
		\\
		\hline
		$0.5$ &$2.35367\times10^{-14}$  &- &-  & -&-&$ 1.781\times10^{-3}$
		\\
		\hline
		$0.6$ &$5.90639\times10^{-14}$  & - &-  & -&$3.274\times10^{-12}$&$1.557\times10^{-3}$
		\\
		\hline
		$0.7$ &$6.72795\times10^{-14}$  &-& -  & -&-&$ 1.134\times10^{-3}$
		\\
		\hline
		$0.8$ &$4.60743\times10^{-14}$  &-& -  & -&$4.789\times10^{-11}$&$6.286\times10^{-4}$
		\\\hline
		$0.9$ &$2.10942\times10^{-14}$  &- &-  & -&-&$ 1.902\times10^{-4}$
		\\	
		\hline
		MAE&$6.72795\times10^{-14}$  & $3.8797\times10^{-11}(N=16)$ & $2. 44\times10^{-10}(N=16)$   & $9.39\times\times10^{-12}$&$4.789\times10^{-11}$&$1.781\times10^{-3}$
		\\
	&&$1.7517\times 10^{-13}(N=32)$&$1.10\times 10^{-11}(N=32)$&&  
	&$2.0070\times 10^{-7}(N=32)$
	\\ [0.5ex] 	
	&&&$1.51\times 10^{-12}(N=64)$&&  
	&
	\\ [0.5ex] 	
	\hline
	\end{tabular}
	\label{table Example 4.6} 
\end{table}

\end{landscape}

\begin{figure}[h!]
	\begin{center}
		\subfigure[Exact and numerical (SHCM) solutions.]{%
			\label{fig:Example4.6a}
			\includegraphics[width=0.475\textwidth]{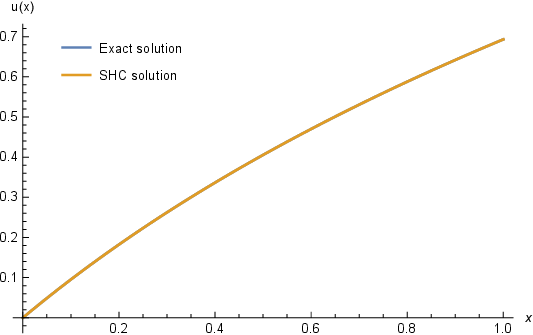}
		}
		\subfigure[Absolute errors of SHCM.]{%
			\label{fig:Example4.6b}
			\includegraphics[width=0.475\textwidth]{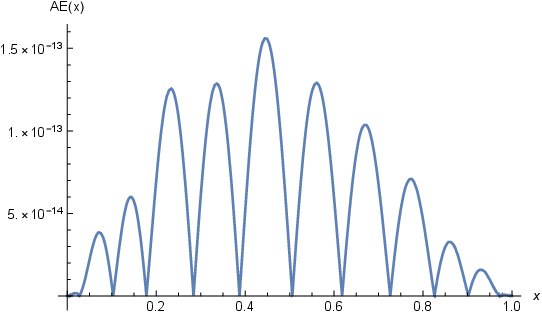}
		}
	\end{center}
	\caption{Comparison of exact and numerical solutions for Example \ref{Example 4.6}.
	}%
	\label{fig:Example 4.6}
\end{figure}

\begin{example}\label{Example 4.7}
Given the nonlinear boundary value problem (\cite{Dang2024}, \cite{Khna2021})
	\begin{equation}\label{eqflanbvp 34th-4}
		\begin{split}		
			&u^{(4)}(x)+u^{2}(x)=-8x\cos x+(x^{2}-13)\sin x+(x^{4}-2x^{2}+1)\sin^{2}x \qquad (0<x<1)\\
			&u(0)=0,\quad u(1)=0, \quad u'(0)=-1, \quad u'(1)=2\sin 1.
		\end{split}
	\end{equation}
	The exact solution  is $u(x)=(x^{2}-1)\sin x$.  Indeed, specialising the shifted Horadam collocation algorithm in Section \ref{Sec SHCM} to the problem \eqref{eqflanbvp 34th-3}, we obtain the  expansion coefficients
	\begin{align}
		c_0=&-0.266618, \quad c_1=0.0219665, \quad c_2=  0.0914115, \quad c_3=-0.0111718\nonumber\\
		c_4=&-0.00153258,\quad c_5=0.000126219, \quad c_6= 6.69415\times 10^{-6}, \quad c_7=  -4.08709\times 10^{-7}\nonumber\\
		c_8=&-1.28302\times 10^{-8}, \quad c_9=6.20319\times 10^{-10}, \quad c_{10}= 1.36393\times 10^{-11}\nonumber\\ c_{11}=& -5.4526\times 10^{-13}, \quad 
		c_{12}= -9.1746\times 10^{-15}, \quad c_{13}=3.12499\times 10^{-16}\nonumber\\ c_{14}=&4.24735\times 10^{-18}, \quad c_{15}= -1.25789\times 10^{-19}, \quad
		c_{16}=-1.43422\times 10^{-21}.	
	\end{align}	
Thus we obtain the approximate series solution 
	\begin{align}
		u(x)=&8.32437\times 10^{-18}-1.00000 x-6.62955\times 10^{-17} x^2+1.16667 x^3-3.48842\times 10^{-16} x^4\nonumber\\
		&-0.17500 x^5-1.16128\times 10^{-13} x^6+0.00853175 x^7-4.21472\times 10^{-12} x^8\nonumber\\
		&-0.000201168 x^9-3.56319\times 10^{-11} x^{10}+2.78084\times 10^{-6} x^{11}-8.32452\times 10^{-11} x^{12}\nonumber\\
		&-2.51343\times 10^{-8} x^{13}-5.18721\times 10^{-11} x^{14}+1.84344\times 10^{-10} x^{15}\nonumber\\
		&-6.15991\times 10^{-12} x^{16}.
	\end{align}
\end{example}

Table \ref{table Example 4.7solution} presents the comparison of the exact solution and SHC solution for Example \ref{Example 4.7}. In Table \ref{table Example 4.7}, we demonstrate the comparison of absolute errors of the present method, and the methods presented in \cite{Dang2024}, \cite{Khna2021}, \cite{Zahra2011} for  Example \ref{Example 4.7}. It is seen clearly that the present method outperforms the other existing methods under comparison. The graphs of the comparison of the exact and SHC solutions are plotted in Figure \ref{fig:Example4.7a} and the graph of the absolute errors is shown in Figure \ref{fig:Example4.7b}.

\begin{table}[H]
	\caption {Comparison of exact solution and SHC solution for Example \ref{Example 4.6}.}
	\centering 
	\begin{tabular}{|c| c| c |} 
		\hline
		$x$ &Exact Solution& SHC Solution 
		\\ [0.5ex] 
		\hline 
		$0.1$ &$-0.09883508248035987$&$ -0.09883508248035985$
		\\
		\hline
		$0.2$ & $-0.19072255756325876$&$ -0.19072255756325873$
		\\
		\hline
		$0.3$ & $-0.26892338806181900$&$ -0.26892338806181887$
		\\
		\hline
		$0.4$ &$-0.32711140753926643$&$ -0.32711140753926630$
		\\
		\hline
		$0.5$ &$-0.35956915395315225$&$ -0.35956915395315220$
		\\
		\hline
		$0.6$ &$-0.36137118297282267$&$ -0.36137118297282245$
		\\
		\hline
		$0.7$ &$-0.32855102049122240$&$ -0.32855102049122230$
		\\
		\hline
		$0.8$ &$-0.25824819272382810$&$ -0.25824819272382790$
		\\\hline
		$0.9$ &$-0.14883211282922182$&$ -0.14883211282922143$ 
		\\	
		\hline
	\end{tabular}
	\label{table Example 4.7solution} 
\end{table}

\begin{table}[H]
	\caption {Comparison of absolute errors of the present method  and those in \cite{Dang2024}, \cite{Khna2021}, \cite{Zahra2011} for  Example \ref{Example 4.7}.}
	\centering 
	\begin{tabular}{|c| c| c |c|c|} 
		\hline
		$x$ & SHCM $(N=16)$& Method \cite{Dang2024}&Method \cite{Khna2021} &Method  \cite{Zahra2011} 
		\\ [0.5ex] 
		\hline 
		$0.1$ & $1.38778\times 10^{-17}$&- & -  &- 
		\\
		\hline
		$0.2$ & $2.77556\times 10^{-17}$& - & -  & - 
		\\
		\hline
		$0.3$ & $1.11022\times 10^{-16}$& -  &- &- 
		\\
		\hline
		$0.4$ &$1.11022\times 10^{-16}$&-& -  &- 
		\\
		\hline
		$0.5$ & $5.55112\times 10^{-17}$&-& - & - 
		\\
		\hline
		$0.6$ & $2.22045\times 10^{-16}$&-&-& - 
		\\
		\hline
		$0.7$ & $5.55112\times 10^{-17}$&-  &-  & -  
		\\
		\hline
		$0.8$ &$2.22045\times 10^{-16}$&-  & - & - 
		\\
		\hline
		$0.9$ & $3.88578\times 10^{-16}$&- & - &- 
		\\
		\hline\hline
		MAE & $3.88578\times 10^{-16}$&$1.0000\times 10^{-13}(N=16)$ & $8.51\times 10^{-12}(N=16)$  & $ 5.83355\times 10^{-12}$ 
				\\
		&&$3.9239\times 10^{-16}(N=32)$&$1.36\times 10^{-12}(N=32)$  
		&
		\\ [0.5ex] 	
		&&&  $9.97\times 10^{-13}(N=64)$
		&
		\\ [0.5ex] 	
			\hline
	\end{tabular}
	\label{table Example 4.7} 
\end{table}

\begin{figure}[h!]
	\begin{center}
		\subfigure[Exact and numerical (SHCM) solutions.]{%
			\label{fig:Example4.7a}
			\includegraphics[width=0.475\textwidth]{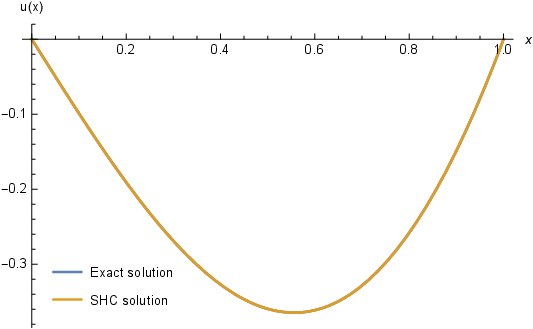}
		}
		\subfigure[Absolute errors of SHCM.]{%
			\label{fig:Example4.7b}
			\includegraphics[width=0.475\textwidth]{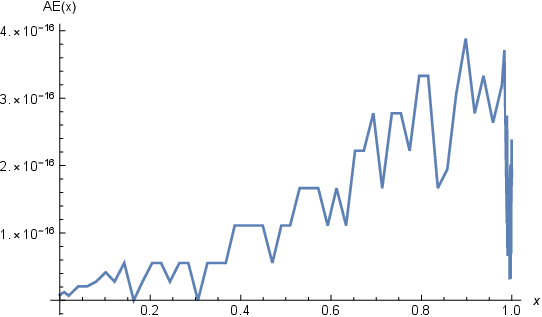}
		}
	\end{center}
	\caption{Comparison of exact and numerical solutions for Example \ref{Example 4.7}.
	}%
	\label{fig:Example 4.7}
\end{figure}

%
%

%
%


%

\section{Conclusion}

 In this paper, an efficient collocation method based on shifted Horadam polynomials has been applied to obtain numerical solutions of a nonlinear fourth-order boundary value problem arising in beam theory. Seven examples of the proposed problem were considered to illustrate the efficiency, reliability, and accuracy of the shifted Horadam collocation method. For comparison purposes, we considered known examples from published papers. Six of the seven examples were considered in a recent paper of Dang et al. \cite{Dang2024} using different methods. In each of the examples considered, the  numerical solutions obtained using the present method were compared with the exact solution and other existing results. It was interestingly clear that, for all the examples considered, the approximate solutions, absolute errors, and maximum absolute errors obtained using the present method outperformed the existing methods under comparison. Hence, the present method was found to be better than the methods considered in \cite{Costabile2015},  \cite{Dang2024}, \cite{Hajji2008}, 
 \cite{Hosseini2023}, \cite{Khna2021}, \cite{Khan2012bvp}, \cite{Moghadam2022}, \cite{Mohanty2000}, \cite{Mustafa2017},  
 \cite{Noor2007},   \cite{Singh2014},     \cite{Zahra2011}. The graphs of the exact and approximate solutions, as well as those of the absolute errors were plotted.

	%
	%


\paragraph{Conflicts of Interest} The author declares that there is no conflict of interest regarding the publication of this paper.		

\paragraph{Funding}  The author did not receive any fund for the research presented in this paper. 


\paragraph{Data Availability}  Not applicable.


\begin{thebibliography}{999}
		
		



\bibitem{AbdElhameed2025} W.M. Abd-Elhameed, O. M. Alqubori, A.G. Atta, A collocation approach for the nonlinear fifth-order KdV equations using certain shifted Horadam polynomials, \textit{Mathematics}, 13, 2025, 300, \url{doi:10.3390/math13020300} 


\bibitem{Adak2021} M. Adak and A. Mandal, Numerical Solution of Fourth-Order Boundary Value Problems for Euler-Bernoulli Beam Equation
using FDM, \textit{J. Phys.: Conf. Ser.},  2070, 2021, 012052.

\bibitem{Agarwal1986} R.P Agarwal, Boundary value problems from higher order differential equations, World Scientific, 1986.

\bibitem{AgarwalR2023} R. Agarwal, G. Mihaylova, P. Kelevedjiev, Existence for Nonlinear Fourth-Order Two-Point Boundary
Value Problems, \textit{Dynamics},  2023, 3, 152–170.


\bibitem{Ahsan2023} M. Ahsan, M. Bohner, A. Ullah, A. A. Khan, S. Ahmad, A Haar wavelet multi-resolution collocation method for singularly perturbed differential equations with integral boundary conditions, \textit{Math. Comput. Simul.}, \textbf{204}, 2023, 166-180.


\bibitem{Akinnukawe2024}  B.I. Akinnukawe, J.O. Kuboye, S.A. Okunuga,   Numerical solution of fourth-order initial value
problems using novel fourth-order block algorithm, \textit{J. Nepal Math. Soc.},  6, 2024,  7-18. 


\bibitem{Akram2013} G. Akram, H.U. Rehman,  Numerical solution of eight order boundary value problems in reproducing kernel space, \textit{Numer. Algor.}, 2013, 62, 527-540.

\bibitem{Alharbi2024} M.H. Alharbi, A.F. Abu Sunayh, A.G. Atta, W.M. Abd-Elhameed, Nover approach by shifted Fibonacci polynomials for solving the fractional Burgers equation, \textit{Fractal Fract}, 8, 2024, 427. 

\bibitem{Ali2019} H.S. Ali, E. Alali, A. Ebaid, F.M. Alharbi, Analytical solution of a class of singular second order boundary value problems with applications, \textit{Math.}, 7, 2019, 1-10.



\bibitem{Ali2011} A. Ali, F. Haq,  Numerical solution of fourth order boundary value problems using Haar wavelets, \textit{Appl. Math. Sci.}, 5, 2011, 3131-3146.

\bibitem{AminRo2021} R. Amin, K. Shah, Q.M. Al-Mdallal, I. Khan,  Efficient numerical algorithm for the solution of eight order boundary value problems by Haar wavet method, \textit{Int. Appl. Comput. Math.}, 2021, 7:34.  \url{doi:10.1007/s40819-021-00975-x}




\bibitem{AwoEmdenFowler} Awonusika RO. Analytical solutions of generalised Emden–Fowler initial and
boundary value problems of higher order. {\it Int J Appl Comput Math}. 2024;10:43.  \url{https://doi.org/10.1007/s40819-024-01676-x} 

\bibitem{AwoOnuoha} Awonusika RO, Onuoha OB. Analytical method for systems of nonlinear   
singular boundary value problems. \textit{Part. Differ. Equat. Appl. Math.}, 2024:11;100762


\bibitem{Bai2007} Bai, Z. The upper and lower solution method for some fourth-order boundary value problems. \textit{Nonlinear Anal. Theory Methods
	Appl.} 2007, 67, 1704–1709.


\bibitem{Benaicha2016} Benaicha, S.; Haddouchi, F. Positive solutions of a nonlinear fourth-order integral boundary value problem.\textit{ Ann. West Univ. Timis.-Math. Comput. Sci.} 2016, 54, 73–86

\bibitem{Bishop1989} R.E.D. Bishop, S.M. Cannon, S. Miao, On coupled bending and torsional vibration of uniform beams, \textit{J. sound Vib.}, 131, 1989, 309-325.


\bibitem{Canuto1991} Canuto, C.; Hussaini, M.; Quarteroni, A.; Zang, T. Spectral Methods in Fluid Dynamics; Springer Series in Computational Physics; Springer: Berlin/Heidelberg, Germany, 1991.


\bibitem{Costabile2015} F.A. Costabile, A. Napoli,  Collocation for high order differential equations with  two-points Hermite boundary conditions, \textit{Appl. Numer. Math.}, 2015, 87, 157-167.

\bibitem{Dang2010} Dang,Q.A.; Luan, V.T. Iterative method for solving a nonlinear fourth order boundary value problem. \textit{Comput. Math. Appl.} 2010, 60, 112–121.

\bibitem{Dang2020}  Dang,Q.A.; Dang, Q.L. Existence results and iterative method for a fully fourth-order nonlinear integral boundary value problem. \textit{Numer. Algorithms}, 2020, 85, 887–907

		

\bibitem{Dang2024} Q. A. Dang, T. H. Nguyen, V. Q. Vu,  Construction of high order numerical methods for solving fourth order nonlinear boundary value problems, \textit{Numer. Algor.}, 2024,  \url{doi:10.1007/s11075-024-01879-9} 


\bibitem{Davidson2006} F.A. Davidson, B.P. Ryne, The formulation of second-order boundary value problems on time scales, \textit{Adv. Diff. Equat.}, 2006, 1-0.

\bibitem{Dimitrov2024} N. D. , J. M. Jonnalagadda, Existence and Nonexistence Results for a Fourth-Order Boundary Value Problem with Sign-Changing Green’s Function, \textit{Mathematics},  2024, 12, 2456.


\bibitem{Erturk2007} V. S. Ert\"{u}rk, S. Momani,  Comparing numerical methods for solving fourth-order boundary value problems, \textit{Applied Mathematics and Computation}, 188 (2007) 1963–1968.

\bibitem{Gul2022} M. Gul, H. Khan, A. Ali, The solution of fifth and sixth order linear and nonlinear boundary values problems by the improved residual power series method, \textit{J. Math. Anal. Modeling}, \textbf{3}, 2022, 1-14.	


\bibitem{Hajji2008} M. A. Hajji, K. Al-Khaled,
Numerical methods for nonlinear
fourth-order boundary value problems with applications, \textit{International Journal of Computer
	Mathematics}, 85, 2008, 83–104.

\bibitem{Hauser2009} J. R. Hauser, Numerical Methods for Nonlinear Engineering Models, Springer, 2009, 883-987.

\bibitem{Horadam1996} A.F. Horadam, Extension of a synthesis for a class of polynomial sequences, \textit{Fibonacci Q.}, 34, 1996, 68-74. 

\bibitem{Horadam1997} Horadam, A.F.: Jacobsthal representation polynomials. \textit{Fibonacci Quart}. 35, 137–148 (1997).

\bibitem{Horadam1985} Horadam, A.F., Mahon, J.M.: Pell and Pell–Lucas polynomials. \textit{Fibonacci Quart}. 23, 7–20 (1985).




\bibitem{Hossain2014} Md. B. Hossain, Md. S. Islam,  Numerical solutions of general fourth order two pint boundary value problems Galerkin method with Legendre polynomials, \textit{Dhaka Univ. J. Sci.}, 62, 2014, 103-108.

\bibitem{Hosseini2023} S.S. Hosseini, A. Aminataei, F. Kianya, M. Alizadeha, M. Zahraei, Change in the form of fourth order two-point boundary value problem for solving by Adomian decomposition and homotopy perturbation methods, \textit{Int. J. Nonlinear Anal. Appl.}, 14 (2023) 7, 255–260.

\bibitem{IqbalMK2020} M. K. Iqbal, New quartic B-spline approximations for numerical solution of fourth order singular  boundary value problems, \textit{Punjab Univ. J. Math.}, \textbf{52}, 2020, 47-63.

\bibitem{Islam2010} S.U. Islam, I. Aziz, B. Sarler, The numerical solution of second-order boundary-value problems by collocation method with Haar wavelets, \textit{Math. Comm. Mode.}, 52, 2010, 1577-1590.


\bibitem{Kelesoglu2014} O. Kelesoglu,  The Solution of Fourth Order Boundary Value Problem Arising out of the Beam-Column Theory Using Adomian Decomposition Method,  \textit{Mathematical Problems in Engineering}, 2014, 2014, 1-6.

\bibitem{Khan2003} A. Khan, T. Aziz, Parametric cubic spline approach to the solution of a system of second order boundary value problems, \textit{J. Optim. Theory Appl.}, 118, 2003, 45-54.

\bibitem{Khna2021} A. Khan, S. Bisht, Exponential spline solution of boundary value problems occurring in the plate deflection theory, \textit{Proc. Natl. Acad. Sci.}, 91, 2021, 289-295. 


		
\bibitem{Khan2024bvp} I. Khan, K.J. Ansari, R.A. Amin, Sundas, H. Farheen, Application of linear legendre multi-wavelets collocation method for solution of fourth order boundary value problems, \textit{Eng. Comput.}, 2024.  \url{doi:10.1007/s00366-024-02078-9}  

\bibitem{Khan2012bvp} Khan A., Zahra W.K., Khandelwal P., Non-polynomial septic splines approach to the solution of fourth-order two point
boundary value problems. \textit{Int J Nonlinear Sci} 13(3):363–372.


\bibitem{KumarN2023} N. Kumar, D. Tiwari, A. K. Verma, C. Cattani, Hybrid model for the optimal numerical solution of nonlinear ordinary differential systems, \textit{Comput. Appl. Math.}, \textbf{42}, 2023:322. \url{http://dx.doi.org/10.1007/s4314-023-02468-7}

\bibitem{Leeb2020} W. Leeb, V. Rokhlin, On the numerical solution of fourth-order linear two-oint boundary value problems, \textit{SIAM J. Sci. Comput.}, 2020,  42, A1789-A1808.


\bibitem{Lewis2022} B.J. Lewis, E.N. Onder, A.A. Prudil, Advanced Mathematics for Engineering Students, Butterworth-Heinemann, 2022.



\bibitem{Li2013} Li, H.; Wang, L.; Pei, M. Solvability of a fourth-order boundary value problem with integral boundary conditions. \textit{J. Appl. Math.}, 2013, 782363, 1–7.

\bibitem{Lv2015} Lv, X.; Wang, L.; Pei, M. Monotone positive solution of a fourth-order BVP with integral boundary conditions. \textit{Bound. Value Probl.} 2015, 2015, 172.

\bibitem{Mehrpouya2024}  M.A. Mehrpouya, R. Salehi, P.J.Y. Wong,  A fast and accurate numerical method for solving Nnonlinear
fourth-order boundary value problems in the beam theory, \textit{ Axioms},  13, 2024,  757. \url{https://doi.org/10.3390/axioms13110757}



\bibitem{Modebei2020} M.I. Modebei, S.N. Jator, H. Ramos, Block hybrid method for the numerical solution of fourth order boundary value problem, \textit{J. Comput. Appl. Math.}, 377, 2020, 1-15. 


\bibitem{Moghadam2022} A.A. Moghadam, A.R. Soheili, A.S. Bagherzadeh, Numerical solution of fourth-order BVPs by using Lidstone-collocation method, \textit{Appl. Math. Comput.}, 425, 2022, 127055. 


\bibitem{Mohanty2000} R.K. Mohanty, A fourth-order finite difference method for the general one-dimensional nonlinear biharmonic problems of the first kind, \textit{J. Comput. Appl. Math.}, 114, 2000, 275-290. 

\bibitem{Momani2006} S. Momani, K. Moadi, A reliable algorithm for solving fourth-order boundary value problems, \textit{J. Appl. Math. Comput.} 22 (3) (2006) 185–197.

\bibitem{Mustafa2017} G.Mustafa, M.Abbas,S.T.Ejaz, A.I. M.Ismail, andF. Khan, A numerical approach based on subdivision schemes for solving
non-linear fourth order boundary value problems, \textit{Journal of Computational Analysis and Applications}, 23, 607-623, 2017.


\bibitem{Noor2007} M.A. Noor, S.T. Mohyud-Din, An efficient method for fourth-order boundary value problems, \textit{Comput . Math. Appl.}, 2007, 54, 1101-1111.


\bibitem{Palamides2012} P.K. Palamides, A.P. Palamides, Fourth-Order Four-Point Boundary Value Problem: A Solutions Funnel Approach, International Journal of Mathematics and Mathematical Sciences, 2012, 2012, 1-18.

\bibitem{PanditB2021} B. Pandit, A. K. Verma, R. P. Agarwal, Numerical approximations for a class of nonlinear higher order singular boundary value problems by using homotopy perturbation and variational iteration method, \textit{Comput. Math. Meth.}, \textbf{3}, 2021, e1195.
%
%
%
\bibitem{PrantaSSD2023} S. S. D. Pranta, M. S. Islam, Numerical approximations of a class of nonlinear second-order boundary value problems using Galerkin-compact finite difference method, \textit{European J. Math. Stat.}, \textbf{4}, 2023, 56-68.

\bibitem{Qayyum2023} M. Qayyum, Q. Fatima, S. T. Saeed, A. Akg\"{u}l, W. Weera, W. R. Alharbi, A reliable algorithm for higher order boundary value problems, \textit{Alexandria Eng. J.}, \textbf{66}, 2023, 315-328.


\bibitem{Ra02024} R. Rao, J. M. Jonnalagadda, Existence of a unique solution to a fourth-order boundary value problem
and elastic beam analysis, \textit{Mathematical Modelling and Control},  4: 297–306.

\bibitem{Ravindra2024}  S. Yardımcı E. U\v{g}urlu, Nonlinear fourth order boundary value problem, \textit{Boundary Value Problems},  2014, 2014:189, 2014:189


\bibitem{Shahni2023} J. Shahni, R. Singh, C. Cattani, Bernoulli collocation method for the third-order Lane-Emden-Fowler boundary value problem, \textit{Appl. Numer. Math.}, \textbf{186}, 2023, 100-113.

\bibitem{Shahni2023b} J. Shahni, R. Singh, C. Cattani, An efficient numerical approach for solving three-point
Lane-Emden-Fowler boundary value problem, \textit{Math. Computer Simul.}, \textbf{210}, 2023, 1-16.
%

\bibitem{SinghR2020} R. Singh, V. Guleria, M. Singh, Haar wavelet quasilinearization method for numerical solution of Emden-Fowler type equations, \textit{Math. Comput. Simul.}, \textbf{174}, 2020, 123-133.

\bibitem{Singh2014} R. Singh, J. Kumar, G. Nelakanti, Approximate series solution of fourth-order boundary value problems using decomposition method with Green's function, , \textit{J. Math. Chem.}, 52, 2014, 1099-1118. 

\bibitem{Swati2020} K. S. Swati, A. K. Verma, M. Singh, Higher order Emden-Fowler type equations via uniform Haar wavelet resolution technique, \textit{J. Comput. Appl. Math.}, \textbf{376}, 2020, 112836.

\bibitem{Tomar2022} Tomar, S.; Singh, M.; Ramos, H.; Wazwaz, A.M. Development of a new iterative method and its convergence analysis for nonlinear fourth-order boundary value problems arising in beam analysis. \textit{Math. Methods Appl. Sci.} 2022, SI, 1–9.

	\bibitem{VermaAK2021} A. K. Verma, B. Pandit, R. P. Agarwal, Analysis and computation of solutions for a class of nonlinear SBVPs arising in epitaxial growth, \textit{Mathematics}, \textbf{9}, 2021, 774.

\bibitem{VermaAK2021-2} A. K. Verma, B. Pandit, R. P. Agarwal, On multiple solutions for a fourth order nonlinear singular boundaryvalue problems arising in epitaxial growth theory, \textit{Math. Meth. Appl. Sci.}, \textbf{44}, 2021, 5418-5435.

	\bibitem{Viswan2013} K.N.S.K. Viswanadham, S. Ballem, Numerical solution of fourth order boundary value problems by Galerkin method with cubic b-splines,  \textit{Int. J. Eng. Sci. innov. Technol.}, 2, 2013, 41-53. 


\bibitem{Viswana2010} K. Viswanadham, P.M. Krishna, R.S. Koneru, Numerical solutions of fourth order boundary value problems by Galerkin method with quintic b-splines, {\it Int. J. Nonlin. Sci.}, \textbf{10}, 2010, 222-230. 

 \bibitem{Wanas2020} A. K. Wanas,  Horadam polynomials for a new family of $\lambda$-pseudo bi-univalent functions associated with Sakaguchi type functions, \textit{Afrika Matematika}, 2020. \url{https://doi.org/10.1007/s13370-020-00867-1}

\bibitem{Wazwaz2010} A-M, Wazwaz,  The Numerical Solution of Special Fourth Order Boundary Value Problems by the Modified Decomposition Method, \textit{Int. J. Computer Math.},  79, 2002, 345-356.  

 \bibitem{Webb2008} Webb,J.; Infante, G.; Franco, D. Positive solutions of nonlinear fourth-order boundary-value problems with local and non-local boundary conditions. Proc. R. Soc. Edinb. Sect. A Math. 2008, 138, 427–446.



\bibitem{Xie2013} J. Xie and Z. Luo,  Solutions to a boundary value problem of a
fourth-order impulsive differential equation, \textit{Boundary Value Problems}, 2013:154.

\bibitem{Zahra2011} Zahra W.K., A smooth approximation based on exponential
spline solutions for nonlinear fourth order two point boundary
value problems.\textit{ Appl Math Comput} 217, 2011, 8447–8457.

\bibitem{Zhang2022} Y. Zhang, L. Chen, Positive solution for a class of nonlinear fourth-order boundary value problem, \textit{ AIMS Mathematics}, 8, 2022, 1014–1021.



















		













%
%
%
%
%
%
%
%
%
%
%
%
%
%
%
%
%
%
%
%
%
%
%
%
%
%
%
%
%
%
%
%
%
%
%
%
%
%
%
%
%
%
%
%
%
%
%
%
%
%
%
%
%
%
%
%
%
%
%
%
%
%
%
%
%
%
%

%
%
%
%
%
%

%
%
%
%
%
%
%
%
%
%
%
%
%
%
%
%
%
%
%
%
%
%
%
%
%
%
%
%
%
%
%
%
%
%
%
%
%
%
%
%
%
%
%
%
%
%
%
%
%
%
%
%
%
%
%
%
%
%
%
%
%
%
%
%
%


%
%
%
%
%
%
%
%
%
%
%
%
%
%
%
%
%
%
%
%
%
%
%
%
%
%
%
%
%
%
%
%
%

%
%
%
%
%
%
%
%
%
%
%
%
%
%
%
%
%
%
%
%
%
%
	\end{thebibliography}
\end{document}